\date{}
\newtheorem{theorem}{Theorem}
\newtheorem{prop}{Proposition}
\newtheorem{lemma}{Lemma}
\newtheorem{rem}{Remark}
\newtheorem{exmp}{Example}
\newtheorem{cor}{Corollary}
\begin{document}
\author{\bf Edyta Bartnicka}
  \title{Stars of graphs of projective codes}
\maketitle

\begin{abstract}
Let $\Gamma_k(V)$ be the Grassmann graph whose vertex set  is formed by all $k$-dimensional subspaces of an $n$-dimensional vector space $V$ over the finite field $F_q$ consisting of $q$ elements. We discuss its subgraph $\Pi(n,k)_q$  formed by projective  codes. We show that there are precisely two types of maximal cliques in $\Pi(n,k)_q$: stars and tops.  We give a complete description of stars, i.e., maximal cliques consisting of all $k$-dimensional projective codes containing a certain  $(k-1)$-dimensional subspace of $V$.
\end{abstract}

{\bf Keywords:}
Projective codes, Grassmann graphs, Stars of Grassmann graphs

\vspace{0.3cm}

{\bf Mathematics Subject Classification:} 51E20, 51E22

\section{Introduction}
Grassmann graphs, which have already been discussed  in a number of books and papers (see, e.g.
\cite{Pankov2010, Pankov2015}), are important for many reasons, for instance, they are classical examples of distance regular graphs (for more details
see \cite{brouwer}).

The Grassmann graph  $\Gamma_k(V)$ is the simple graph whose vertex set ${\mathcal G}_{k}(V)$ is formed by $k$-dimensional subspaces of an $n$-dimensional vector space $V$ over the $q$-element field $F_q$. It can be identified with the graph of all linear  codes $[n, k]_q$. Two distinct codes are connected by an edge if they have the maximal possible number of common codewords, i.e. their intersection is $(k-1)$-dimensional. We only deal with the case  $1<k<n-1$ since the Grassmann graph is complete for $k=1$ and $k=n-1$.

This paper concerns the restriction of the Grassmann graph to the set $\Pi(n,k)_q$ of all projective codes $[n,k]_{q}$, i.e., linear codes such that columns in their generator matrices are non-zero and mutually non-proportional. They form the subgraph of the Grassmann graph denoted by $\Pi[n, k]_q$.  We refer to \cite{KP3} for the description of some properties of graphs of projective codes.
The subgraphs of the Grassmann graph formed by various types of linear
codes are considered, e.g. in 
\cite{CGK, CG, KP2, Pankov2023}.
Maximal cliques in these graphs  are investigated in \cite{BM, KP4} for example.

We direct our attention to maximal cliques  of $\Pi[n,k]_q$. They are intersections of maximal cliques of $\Gamma_k(V)$ with the set $\Pi(n,k)_q$. There are exactly two types of maximal cliques in $\Gamma_k(V)$: stars $[S\rangle_k$ (made up by all $k$-dimensional subspaces of $V$ containing a fixed $(k-1)$-dimensional subspace $S$) or tops (consisting of all $k$-dimensional subspaces of $V$ contained in a fixed $(k+1)$-dimensional subspace). Any non-empty intersection of a star or a top with the set $\Pi(n, k)_q$ is a clique, but it need not be a maximal clique. If this intersection is a maximal clique of $\Pi(n,k)_q$, then we call it a star or a top  of $\Pi[n,k]_q$, respectively.  Here we  provide the thorough description of stars  of $\Pi[n, k]_q$. 
We consider two cases of $S$ separately. First we  assume that a generator matrix for a $(k-1)$-dimensional code $S$ does not contain zero columns (Section \ref{non}), and secondly, we study the opposite case (Section \ref{deg}). The starting point of both investigations  is showing that if a generator matrix for  $S$ does not contain proportional columns, then the intersection of a star $[S\rangle_k$ with the set $\Pi(n, k)_q$  is a star of $\Pi[n, k]_q$. Next we study intersections of  stars $[S\rangle_k$ with the set $\Pi(n, k)_q$ such that generator matrices for $S$ contain proportional columns.   We specify the number of projective codes in such intersections. 
Finally, we indicate all parameters for generator matrix for $S$ to $[S\rangle_k\cap\Pi(n, k)_q$ be a maximal clique of $\Pi(n, k)_q$ (Theorems \ref{notprojectiveS} and \ref{degenerateS}). Also, we show that there are maximal cliques of $\Pi(n, k)_q$ which are stars and tops of $\Pi[n,k]_q$ simultaneously.
 Examples of maximal cliques of $\Pi(n, k)_q$
 for different cases of $S$
 are also given.

\section{Graph of projective  codes} 
Let $F_q$ be the finite field consisting of $q$ elements  and let  $V=F_{q}^{n}$ be the  $n$-dimensional vector space  over $F_q$. Consider  the Grassmannian  ${\mathcal G}_{k}(V)$ consisting of all $k$-dimensional subspaces of $V$.
The {\it Grassmann graph} $\Gamma_{k}(V)$ is the simple and connected graph whose vertex set is ${\mathcal G}_{k}(V)$. Its two vertices comprise an edge if, and
only if, their intersection is $(k-1)$-dimensional.
The Grassmann graph is complete if $k=1$ or $k=n-1$, and that is why we suppose that $1<k<n-1$ throughout the paper.


One of the basic ideas of graph theory is that of a clique. A clique in
an undirected graph is a subset of the set of its vertices such that any two distinct
elements from this subset are joined by an edge. 
If a clique is not
properly contained in any clique, then it  is said to be {\it maximal}.
Any  maximal clique of $\Gamma_{k}(V)$ is one of the following types:
the {\it star}  $[S\rangle_{k}$ consisting of all $k$-dimensional subspaces of $V$ containing a fixed subspace $S\in {\mathcal G}_{k-1}(V)$, 
or the {\it top}  $\langle U]_{k}$ consisting of all $k$-dimensional subspaces of $V$ contained in a fixed  subspace $U\in {\mathcal G}_{k+1}(V)$.

Recall that $V$ contains precisely
$\genfrac{[}{]}{0pt}{}{n}{1}_q=\genfrac{[}{]}{0pt}{}{n}{n-1}_q=
[n]_{q}=\frac{q^n-1}{q-1}$ $1$-dimensional subspaces 
 and the total number of elements of $\langle S]_{k}$ is equal $[n-k+1]_{q}$.
The intersection of two distinct maximal cliques of the  form $\langle S]_{k}$  contains at most one vertex.
The intersection of $[S\rangle_{k}$ and $\langle U]_{k}$ is either empty or it contains precisely $q+1$ elements. The latter is realized exactly if
the corresponding $(k-1)$-dimensional and $(k+1)$-dimensional subspaces $S, U$ are incident. 


Any element of ${\mathcal G}_{k}(V)$ is interpreted as a {\it linear code} $[n,k]_{q}$ and it is called {\it non-degenerate} if 
its generator matrix does not contain zero columns.
If 
columns in a generator matrix for a linear code  are non-zero and mutually non-proportional, then this code 
is said to be {\it projective}.

Write $\Pi[n, k]_q$  for the restriction of the  Grassmann graph to the set $\Pi(n, k)_q$ of all projective codes. This is the simple  graph  whose set of vertices is $\Pi(n,k)_{q}$. 
It was shown in \cite[Theorem 1]{KP3} that in the case when $q\geq \binom{n}{2}$, the graph $\Pi[n, k]_q$ is connected.
Any clique of $\Pi(n,k)_{q}$ is a  clique of $\Gamma_{k}(V)$ since $\Pi[n, k]_q$  is a subgraph of $\Gamma_{k}(V)$. And so any maximal clique of $\Pi(n,k)_{q}$ is
 the intersection of $\Pi(n,k)_{q}$ with a maximal clique of $\Gamma_{k}(V)$.  We denote intersections $[S\rangle_{k}\cap{\Pi}(n,k)_q$ and $\langle U]_{k}\cap{\Pi}(n,k)_q$ by $[S\rangle^{\Pi}_{k}$ and $\langle U]^{\Pi}_{k}$, respectively.
They both
can be empty or  non-maximal cliques of $\Pi(n,k)_{q}$, otherwise they are called {\it a star} or {\it a top} of $\Pi[n, k]_q$, respectively.



\section{Basic results}

Consider a $(k-1)$-dimensional linear code $S$. The set $[S\rangle^{\Pi}_{k}$ is formed by all $k$-dimensional projective codes $Q_i$ containing $S$. A generator matrix $M_i$ for any  code $Q_i$  can be obtained from a generator matrix $M=\left[\begin{matrix}
                                                                                                                    v_1\\
                                                                                                                    v_2\\
                                                                                                                    \vdots\\
                                                                                                                    v_{k-1}
                                                                                                                  \end{matrix}\right]$ for $S$ by adding a certain $n$-dimensional row vector $w_i$ which is not a linear combination of the rows from $M$. 
                                                                                                                  We can assume without loss of generality that  $M=\left[I_{k-1}\ \  A\right]$,
where $I_{k-1}$ means the identity ($k-1$)-matrix, $A$ is $(k-1)\times (n-k+1)$ matrix with $n\geqslant k+2$ (since $1<k< n-1$). Then we can also suppose that the first $k-1$ coordinates of $w_i$ are $0$. The  set of all such vectors $w_i$ will be denoted by $W$.

Assume that two proportional columns in $M$  are equivalent and write $l(k_j)$ for the number of columns in the equivalence class $[k_j]$ of a column $k_j$ (the $j-th$ column of $M$), $l(S)$ for the number of all equivalence classes. The number of zero columns of $M$ will be denoted by $c(S)$.

The above notations we shall use throughout the paper.
\begin{rem}\label{lkj}
  It is obvious that if there exists a column $k_j$ of $M$ such that $l(k_j)>q$ or if $c(S)>1$, then $[S\rangle^{\Pi}_{k}$ is an empty set (there are proportional or even zero columns in  any matrix obtained from $M$ by adding an $n$-dimensional row vector).
\end{rem}
\begin{prop}
  \label{dimw1}
   If $S$ is  a $(k-1)$-dimensional  subspace of $V$ and $\dim\langle W\rangle=1$, there is no any $(k+1)$-dimensional subspace $U$ of $V$ such that $[S\rangle^{\Pi}_{k}=\langle U]^{\Pi}_{k}$.
\end{prop}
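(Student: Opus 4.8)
The plan is to argue by contradiction. Assume that some $(k+1)$-dimensional $U$ satisfies $[S\rangle^{\Pi}_{k}=\langle U]^{\Pi}_{k}$. Since $\dim\langle W\rangle=1$, any two vectors of $W$ are proportional, hence together with $S$ they span the same $k$-dimensional code; so $[S\rangle^{\Pi}_{k}$ is a single code $Q=S+\langle w_0\rangle$, where $w_0$ is any nonzero element of $W$. Thus $\langle U]^{\Pi}_{k}=\{Q\}$, and since $Q$ belongs to both $[S\rangle_{k}$ and $\langle U]_{k}$ we get $S\subseteq Q\subseteq U$; consequently $[S\rangle_{k}\cap\langle U]_{k}$ is nonempty, so it consists of exactly $q+1$ codes, $Q$ is the only projective one among them, and in fact $Q$ is the only projective $k$-dimensional subcode of $U$.

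The next step is to read off the structure forced by $\dim\langle W\rangle=1$. Take $M=[I_{k-1}\ \ A]$ as a generator matrix of $S$ chosen so that $M$ extended by the row $(0,\dots,0,w_0)$ generates $Q$. Modifying a single entry of $w_0$ must leave $W$, for otherwise we would obtain a vector of $W$ independent of $w_0$ and $\dim\langle W\rangle\ge 2$. Combining this observation with Remark~\ref{lkj} yields: $q=2$; the columns of $A$ are pairwise distinct standard basis vectors, together with at most one zero column; and $w_0=(1,\dots,1)$. In particular $n=k-1+m$ with $3\le m\le k$, so $k\ge 3$. Let $G_U$ be the generator matrix of $U$ obtained from $M$ by adjoining the rows $(0,\dots,0,w_0)$ and $(0,\dots,0,w')$ with $w'$ linearly independent of $w_0$; its columns are manifestly pairwise distinct and nonzero, so $U$ is projective.

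It remains to produce a projective $k$-dimensional subcode of $U$ different from $Q$, contradicting $\langle U]^{\Pi}_{k}=\{Q\}$. Over $F_2$ the $k$-dimensional subcodes of $U$ are the codes $R_z$, one for each one-dimensional $\langle z\rangle\subseteq F_2^{k+1}$, where $R_z$ has a generator matrix $BG_U$ with $\ker B=\langle z\rangle$; such $R_z$ is projective exactly when $z$ lies on no line $\langle p^i,p^j\rangle$ spanned by two columns of $G_U$, and $Q=R_{e_{k+1}}$. The columns of $G_U$ are $e_1,\dots,e_{k-1}$ together with vectors of the form $e_i+e_k+\varepsilon e_{k+1}$ (from the nonzero columns of $A$) and possibly one $e_k+\varepsilon e_{k+1}$ (from a zero column), with $\varepsilon\in\{0,1\}$. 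A short check of which pairs of these columns can add up to the relevant vectors shows that $\langle e_k+e_{k+1}\rangle$ lies on no such line, except when $A$ has a zero column whose $w'$-coordinate equals $1$; in that remaining case the same kind of check shows that some $\langle e_a+e_b+e_{k+1}\rangle$ with distinct $a,b\le k-1$ works, apart from the small case $k=3$, where instead some $\langle e_c+e_{k+1}\rangle$ with $c\le k-1$ does. Either way we obtain $\langle z\rangle\neq\langle e_{k+1}\rangle$ with $R_z$ projective, i.e.\ $R_z\in\langle U]^{\Pi}_{k}\setminus\{Q\}$, the desired contradiction.

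The heart of the argument, and the main obstacle, is the structural reduction of the second paragraph together with the bookkeeping in the third: one must organize a short list of candidate points $z$ covering every admissible $w'$. Once $q=2$ and the shape of $A$ are fixed, all of these verifications are finite and routine.
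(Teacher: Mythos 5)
Your overall strategy is sound and, once translated, is essentially the paper's: reduce to $q=2$ with $A$ made of distinct standard basis columns (plus at most one zero column) and a unique $w_0=(1,\dots,1)$, then exhibit a projective $k$-dimensional subcode of $U$ other than $Q$. Your dual parametrization of the subcodes of $U$ by points $\langle z\rangle$ is a clean reformulation (your $z=e_c+e_{k+1}$ and $z=e_a+e_b+e_{k+1}$ are exactly the paper's ``add $u$ to one row $v_i$'' and ``add $u$ to two rows'' constructions). However, there is a concrete error in your case analysis of the first candidate. The code $R_{e_k+e_{k+1}}$ is generated by $v_1,\dots,v_{k-1},w_0+w'$, so it contains $S$ and is distinct from $Q$; since $Q$ is the unique projective code containing $S$, the candidate $R_{e_k+e_{k+1}}$ is \emph{never} projective. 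Equivalently, $w'\neq 0$ forces some coordinate $w'_r=1$: if it sits over a nonzero column $e_i$ of $A$, then $e_i$ and $e_i+e_k+e_{k+1}$ are both columns of $G_U$ and sum to $e_k+e_{k+1}$; if it sits over the zero column, $e_k+e_{k+1}$ is itself a column. So your claim that $\langle e_k+e_{k+1}\rangle$ works ``except when $A$ has a zero column whose $w'$-coordinate equals $1$'' is false (e.g.\ $k=4$, $A=[e_1\,e_2\,e_3]$, $w'=(1,0,0)$), and the case split you build on it does not partition the configurations correctly.

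The proof is nevertheless repairable, because your fallback candidates happen to cover everything unconditionally: $e_a+e_b+e_{k+1}$ fails only when both $a,b$ index nonzero columns of $A$ with distinct $w'$-coordinates, which rules out every pair only when $k=3$ and $\varepsilon_1\neq\varepsilon_2$; in that case $t=2$ forces the zero column to be present, $\varepsilon_0$ agrees with one of $\varepsilon_1,\varepsilon_2$, and the corresponding $e_c+e_{k+1}$ works. So you should simply delete the $e_k+e_{k+1}$ candidate and run the checks for the other two candidates over all admissible $w'$, not merely in the ``remaining case'' as written. With that correction your argument matches the paper's in substance; the paper organizes the same two constructions by splitting on whether $S$ is degenerate and on whether the $u$-coordinate over the zero column agrees with some other coordinate of $u$, which is the primal form of exactly the dichotomy you need.
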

\begin{proof}
 Assume that $\dim\langle W\rangle=1$. This is the case, when there exists precisely one vector $w_1$ such that $=\left[\begin{matrix}
                                                                                                                    M\\
                                                                                                                    w_1
 \end{matrix}\right]$ is a projective code, or equivalently, $q=2$ and  $A$ consists of:
  \begin{enumerate}
   \item mutually non-proportional columns $k_{j'_1},k_{j'_2}, \dots , k_{j'_t}$, with $3\leqslant t\leqslant k-1$ (this is the case of non-degenerate linear code $S$), or

 \item  a zero column and  mutually non-proportional columns $k_{j'_1},k_{j'_2}, \dots , k_{j'_t}$, where $2\leqslant t\leqslant k-1$,
  \end{enumerate}
and  
$k_{j'_1},k_{j'_2}, \dots , k_{j'_t}$ are  exactly the same as  columns $k_{j_1},k_{j_2}, \dots, k_{j_t}$ of $I_{k-1}$, respectively. So,  for any $p\in \{1, 2, \dots t\}$ the $j_p$-th and the $j'_p$-th coordinates of the $j_p$-th row of $M$ are its only non-zero elements. Any other row of $M$ has exactly one non-zero coordinate (on the main diagonal of $I_{k-1}$) and the last $n-k+1$ coordinates of $w_1$ are non-zero.

 Any $(k+1)$-dimensional subspace $U$ of $V$ such that $[S\rangle^{\Pi}_{k}\subseteq\langle U]^{\Pi}_{k}$ is a projective code whose generator matrix is of the form $\left[\begin{matrix}
                                                                                                                    M\\
                                                                                                                    w_1\\
                                                                                                                    u
 \end{matrix}\right]$ and we can suppose that the first $k-1$ coordinates of a row vector $u$ are $0$.

 Let $S$ be non-degenerate. Consider a matrix obtained from $\left[\begin{matrix}
                                                                                                                    M\\
                                                                                                                    w_1
 \end{matrix}\right]$  by adding the vector $u$ to one from the row vectors $v_i$ of $M$.  Such matrix generates a $k$-dimensional projective code which is not an element of $[S\rangle^{\Pi}_{k}$, and so $[S\rangle^{\Pi}_{k}$ is a proper subset of $\langle U]^{\Pi}_{k}$.
 Similarly, we can indicate  an element of the set $\langle U]^{\Pi}_{k}$ which does not belong to $[S\rangle^{\Pi}_{k}$ if there is a zero column in generator matrices for $S$. Namely, such code is generated by  a matrix obtained from $\left[\begin{matrix}
                                                                                                                    M\\
                                                                                                                    w_1
 \end{matrix}\right]$  by adding the vector $u$ to any two row vectors $v_i$ of $M$ if a coordinate of $u$ in the zero column is different from all the remaining among the last $n-k+1$ coordinates of $u$. Otherwise, that is when a coordinate of $u$ in the zero column is the same as the one in columns  $k_{j'_{p_1}},k_{j'_{p_2}}, \dots , k_{j'_{p_{l}}}$, where $1\leqslant l\leqslant t-1$, then the searched matrix is obtained from  $\left[\begin{matrix}
                                                                                                                    M\\
                                                                                                                    w_1
 \end{matrix}\right]$ by adding $u$ to the $k_{j_p}$-th row, where $k_{j'_p}$ is one of the columns $k_{j'_{p_1}},k_{j'_{p_2}}, \dots , k_{j'_{p_{l}}}$.
\end{proof}
The above proposition implies the following:
\begin{cor}
  \label{dimw1nie}
  If $S$ is  a $(k-1)$-dimensional  subspace of $V$ and $\dim\langle W\rangle=1$, then  $[S\rangle^{\Pi}_{k}$ is not a maximal clique  of $\Pi(n, k)_q$.
\end{cor}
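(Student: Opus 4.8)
The plan is to derive Corollary \ref{dimw1nie} directly from Proposition \ref{dimw1}. Recall that any maximal clique of $\Pi(n,k)_q$ is the intersection of $\Pi(n,k)_q$ with a maximal clique of $\Gamma_k(V)$, i.e. it is either a star $[S'\rangle^{\Pi}_k$ for some $S'\in{\mathcal G}_{k-1}(V)$ or a top $\langle U]^{\Pi}_k$ for some $U\in{\mathcal G}_{k+1}(V)$. So to show that $[S\rangle^{\Pi}_k$ is not a maximal clique, I would argue that it cannot coincide with any maximal clique of either type, which forces it to be a proper subset of one.

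First I would dispose of the star case. If $[S\rangle^{\Pi}_k=[S'\rangle^{\Pi}_k$ with $S'\neq S$, then I would show this is impossible: under the hypothesis $\dim\langle W\rangle=1$ the set $[S\rangle^{\Pi}_k$ has exactly $q+1=3$ elements (the three $k$-dimensional subspaces between $S$ and the $(k+1)$-dimensional space $\langle M, w_1\rangle$), all of which contain $S$; any code in $[S'\rangle^{\Pi}_k$ contains $S'$, and two distinct $(k-1)$-dimensional subspaces cannot both be contained in all three of these $k$-dimensional spaces unless they are equal. Alternatively, and more cleanly, since $[S\rangle^{\Pi}_k$ is nonempty, $S$ is recovered as the intersection of all members of the clique (their pairwise intersections are already $(k-1)$-dimensional and equal to $S$), so $S'=S$.

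Then the top case is exactly what Proposition \ref{dimw1} rules out: there is no $(k+1)$-dimensional $U$ with $[S\rangle^{\Pi}_k=\langle U]^{\Pi}_k$. Combining the two cases, $[S\rangle^{\Pi}_k$ equals no maximal clique of $\Pi(n,k)_q$; since it is a clique and $\Pi(n,k)_q$ is finite, it must be properly contained in some maximal clique, hence is not maximal. I would also note the consistency check that $[S\rangle^{\Pi}_k$ is nonempty here (the vector $w_1$ exists by hypothesis), so the statement is not vacuous and the argument that it sits inside a strictly larger clique actually has content.

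The main obstacle is really just making the star step airtight: one must be sure that a maximal clique of $\Pi(n,k)_q$ of ``star type'' is of the form $[S'\rangle^{\Pi}_k$ and that from such a clique (when nonempty) the subspace $S'$ is uniquely determined, so that equality of cliques forces equality of the defining $(k-1)$-spaces. This is immediate from the description of maximal cliques of $\Gamma_k(V)$ recalled in Section 2, since distinct stars of $\Gamma_k(V)$ intersect in at most one vertex while $[S\rangle^{\Pi}_k$ has $q+1\geqslant 3$ vertices; hence no relabelling of $S$ is possible, and the whole weight of the corollary rests on Proposition \ref{dimw1}, which has already been proved.
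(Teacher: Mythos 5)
There are two genuine problems here. First, your cardinality count is wrong: when $\dim\langle W\rangle=1$ every admissible vector $w_i$ is a nonzero scalar multiple of $w_1$, so all of them generate the same code together with $S$, and $[S\rangle^{\Pi}_{k}$ is a \emph{singleton} $\{Q_1\}$, not a set of $q+1=3$ codes (the proof of Proposition \ref{dimw1} even notes that there is precisely one such vector $w_1$ and that $q=2$). The $q+1$ intermediate subspaces you have in mind sit between a $(k-1)$-space and a $(k+1)$-space, whereas $\langle M,w_1\rangle$ is only $k$-dimensional. This collapses both of your arguments for the star case: a singleton clique has no pairwise intersections from which to recover $S$, and the fact that two distinct stars of $\Gamma_k(V)$ share at most one vertex gives nothing when the clique has only one vertex --- indeed $\{Q_1\}=[S'\rangle^{\Pi}_{k}$ could a priori hold for several $(k-1)$-dimensional subspaces $S'$ of $Q_1$.

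Second, and more fundamentally, the logic does not close. Ruling out $[S\rangle^{\Pi}_{k}=\langle U]^{\Pi}_{k}$ (which is what Proposition \ref{dimw1} states) and $[S\rangle^{\Pi}_{k}=[S'\rangle^{\Pi}_{k}$ with $S'\neq S$ still leaves open the possibility that $[S\rangle^{\Pi}_{k}$ is a maximal clique of star type with $S'=S$ --- but that is exactly the assertion to be refuted, so the ``combining the two cases'' step is circular. Non-maximality cannot be extracted from the \emph{statement} of Proposition \ref{dimw1} alone; it comes from its \emph{proof}, which takes a $(k+1)$-dimensional $U$ containing $Q_1$ and explicitly constructs a projective code in $\langle U]^{\Pi}_{k}\setminus[S\rangle^{\Pi}_{k}$, i.e.\ a neighbour of $Q_1$ in $\Pi[n,k]_q$; the resulting two-element clique properly contains $\{Q_1\}=[S\rangle^{\Pi}_{k}$ and that is what makes it non-maximal. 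This exhibition of a strictly larger clique is the step your argument is missing.
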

\begin{lemma}
  \label{dimwi}
  Let $S$ be  a $(k-1)$-dimensional  subspace of $V$. 
  If $\dim\langle W\rangle>2$, then  $[S\rangle^{\Pi}_{k}$ is a star of $\Pi[n, k]_q$.
\end{lemma}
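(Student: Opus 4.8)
The plan is to prove that $[S\rangle^{\Pi}_{k}$ is a \emph{maximal} clique of $\Pi(n,k)_q$; since a set of this form is by definition a star of $\Pi[n,k]_q$ precisely when it is a maximal clique, this is exactly what is required. First note that $[S\rangle^{\Pi}_{k}$ is a non-empty clique: it is the intersection of the star $[S\rangle_{k}$ of $\Gamma_k(V)$ with $\Pi(n,k)_q$, and $\dim\langle W\rangle>2$ forces $W\neq\emptyset$. As recalled in the previous section, every maximal clique of $\Pi(n,k)_q$ is the intersection of $\Pi(n,k)_q$ with a maximal clique of $\Gamma_k(V)$, that is, with a star $[S'\rangle_{k}$ or a top $\langle U]_{k}$. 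Hence, if $[S\rangle^{\Pi}_{k}$ were not maximal, there would exist either $S'\in{\mathcal G}_{k-1}(V)$ with $[S\rangle^{\Pi}_{k}\subsetneq[S'\rangle^{\Pi}_{k}$, or $U\in{\mathcal G}_{k+1}(V)$ with $[S\rangle^{\Pi}_{k}\subsetneq\langle U]^{\Pi}_{k}$, and I would exclude both alternatives.

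Keep the normalization $M=[I_{k-1}\ \ A]$, so that each $Q\in[S\rangle^{\Pi}_{k}$ can be written as $Q=S+\langle w\rangle$ with $w\in W$, i.e.\ $w$ has vanishing first $k-1$ coordinates. The elementary observation underlying everything is that the only vector of $S$ with vanishing first $k-1$ coordinates is $0$; consequently $\langle W\rangle\cap S=\{0\}$, every non-zero $w\in W$ lies outside $S$, and two distinct lines $\langle w_1\rangle\neq\langle w_2\rangle$ with $w_1,w_2\in W$ are linearly independent modulo $S$. For the associated codes $Q_1,Q_2$ this gives $\dim(Q_1+Q_2)=(k-1)+2=k+1$, hence $\dim(Q_1\cap Q_2)=k-1$ and, since $S\subseteq Q_1\cap Q_2$, in fact $Q_1\cap Q_2=S$. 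Because $\dim\langle W\rangle>2$ there are at least two such lines, so $\bigcap_{Q\in[S\rangle^{\Pi}_{k}}Q\subseteq Q_1\cap Q_2=S\subseteq\bigcap_{Q\in[S\rangle^{\Pi}_{k}}Q$, i.e.\ $\bigcap_{Q\in[S\rangle^{\Pi}_{k}}Q=S$.

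Now I rule out the two alternatives. If $[S\rangle^{\Pi}_{k}\subseteq[S'\rangle^{\Pi}_{k}$, then $S'$ is contained in every $Q\in[S\rangle^{\Pi}_{k}$, so $S'\subseteq\bigcap_{Q}Q=S$; as $\dim S'=\dim S=k-1$ this yields $S'=S$ and $[S'\rangle^{\Pi}_{k}=[S\rangle^{\Pi}_{k}$, contradicting the strict inclusion. If instead $[S\rangle^{\Pi}_{k}\subseteq\langle U]^{\Pi}_{k}$, then every $Q\in[S\rangle^{\Pi}_{k}$ lies in $U$, hence $\sum_{Q\in[S\rangle^{\Pi}_{k}}Q\subseteq U$; but $\sum_{Q\in[S\rangle^{\Pi}_{k}}Q=S+\langle W\rangle$, whose dimension equals $(k-1)+\dim\langle W\rangle\geq k+2$ by the previous paragraph, while $\dim U=k+1$, a contradiction. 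Therefore $[S\rangle^{\Pi}_{k}$ is not properly contained in any star or top restricted to $\Pi(n,k)_q$, so it is a maximal clique of $\Pi(n,k)_q$, i.e.\ a star of $\Pi[n,k]_q$.

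The only genuine content is the linear-algebra bookkeeping of the two dimension identities $Q_1\cap Q_2=S$ (for codes with distinct directions) and $\dim(S+\langle W\rangle)=(k-1)+\dim\langle W\rangle$; once these are established the exclusion of both cases is immediate. I expect the mild point worth care to be the allocation of hypotheses: the computation $\bigcap_Q Q=S$ already works as soon as $\dim\langle W\rangle\geq 2$ (two distinct directions suffice), so the full strength $\dim\langle W\rangle>2$ is needed only to defeat the top alternative through the strict inequality $k+2>k+1$, which is why the threshold in the statement is $2$ rather than $1$.
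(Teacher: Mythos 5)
Your proof is correct and follows essentially the same route as the paper: rule out containment in another star (the paper quotes the fact that two distinct stars of $\Gamma_k(V)$ meet in at most one vertex, while you derive $\bigcap_{Q}Q=S$ directly — the same underlying linear algebra), and rule out containment in a top via the dimension count $\dim U\geq\dim S+\dim\langle W\rangle=(k-1)+\dim\langle W\rangle>k+1$, which is exactly the paper's argument. Your closing remark about where the threshold $\dim\langle W\rangle>2$ is actually needed is accurate.
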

\begin{proof}
 Assume that $\dim\langle W\rangle>2$. It implies $\left|[S\rangle^{\Pi}_{k}\right|>2$. Hence $[S\rangle^{\Pi}_{k}$ is not properly  contained in any  star of $\Pi[n, k]_q$ since there is at most one element in the intersection of two distinct stars of ${\mathcal G}_{k}(V)$.

$[S\rangle^{\Pi}_{k}$ is also not contained in any  clique $\langle U]^{\Pi}_{k}$ of $\Pi(n,k)_{q}$, otherwise\linebreak
$\dim U\geqslant \dim\langle \{v_1, \dots, v_{k-1},W\}\rangle=\dim S+\dim \langle W\rangle$, and therefore $\dim W\leqslant k+1-(k-1)=2$, what contradicts the assumption. This finishes the proof.
\end{proof}
Of course, when $\left|[S\rangle^{\Pi}_{k}\right|> q+1$, the set $[S\rangle^{\Pi}_{k}$ is a maximal clique of $\Pi(n, k)_q$. As it was said before, such $[S\rangle^{\Pi}_{k}$ is not contained in any other star of $\Pi[n, k]_q$, but also in any top of $\Pi[n, k]_q$. It follows from the fact that
 the intersection of a star and a top of ${\mathcal G}_{k}(V)$ contains no more than $q+1$ elements.
\begin{rem}\label{dimw2}
 Let $S$ be  a $(k-1)$-dimensional  subspace of $V$ and $\dim\langle W\rangle=2$. Notice  that  $U=\left\langle \{v_1, \dots, v_{k-1}, w_1, w_2\}\right\rangle$ is the only one  $(k+1)$-dimensional subspace of $V$ such that $\langle U]^{\Pi}_{k}$ contains $[S\rangle^{\Pi}_{k}$.
\end{rem}
\begin{prop}
  \label{startop}
Let $S$ be  a $(k-1)$-dimensional  subspace of $V$. The set $[S\rangle^{\Pi}_{k}$ is  a star of $\Pi[n, k]_q$ and  $\dim\langle W\rangle=2$ if, and only if, $[S\rangle^{\Pi}_{k}=\langle U]^{\Pi}_{k}$, where $U$ is a $(k+1)$-dimensional subspace of $V$ such that  $U=\left\langle \{v_1, \dots, v_{k-1}, w_1, w_2\}\right\rangle$.
\end{prop}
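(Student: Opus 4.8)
The plan is to prove the two implications separately; the ``only if'' direction is short once Remark~\ref{dimw2} is available, while the ``if'' direction needs a brief maximality argument. For the ``only if'' direction, assume $[S\rangle^{\Pi}_k$ is a star of $\Pi[n,k]_q$, i.e.\ a maximal clique of $\Pi(n,k)_q$, and that $\dim\langle W\rangle=2$. Since every vector of $W$ has its first $k-1$ coordinates equal to $0$, whereas the rows $v_1,\dots,v_{k-1}$ of $M=[I_{k-1}\ A]$ carry the block $I_{k-1}$, one checks that $S\cap\langle W\rangle=\{0\}$; hence I can pick $w_1,w_2\in W$ spanning $\langle W\rangle$, and then $U:=\langle\{v_1,\dots,v_{k-1},w_1,w_2\}\rangle$ has dimension $(k-1)+2=k+1$. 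By Remark~\ref{dimw2} this $U$ satisfies $[S\rangle^{\Pi}_k\subseteq\langle U]^{\Pi}_k$. As $\langle U]_k$ is a (top) clique of $\Gamma_k(V)$, the set $\langle U]^{\Pi}_k=\langle U]_k\cap\Pi(n,k)_q$ is a clique of $\Pi(n,k)_q$ containing the maximal clique $[S\rangle^{\Pi}_k$, so it cannot contain it properly; therefore $[S\rangle^{\Pi}_k=\langle U]^{\Pi}_k$, which is exactly the assertion.

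For the ``if'' direction, assume $[S\rangle^{\Pi}_k=\langle U]^{\Pi}_k$ with $U=\langle\{v_1,\dots,v_{k-1},w_1,w_2\}\rangle$ a $(k+1)$-dimensional subspace, where $w_1,w_2\in W$; as above $S\cap\langle W\rangle=\{0\}$, so $\dim\langle w_1,w_2\rangle=2$. I would first pin down $\dim\langle W\rangle=2$. For the lower bound, the codes $\langle S,w_1\rangle$ and $\langle S,w_2\rangle$ lie in $[S\rangle^{\Pi}_k$ (they contain $S$ and are projective because $w_1,w_2\in W$) and are distinct (since $\dim\langle w_1,w_2\rangle=2$), so $|[S\rangle^{\Pi}_k|\ge 2$ and $\dim\langle W\rangle\ge 2$. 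For the upper bound, every $w\in W$ gives $\langle S,w\rangle\in[S\rangle^{\Pi}_k=\langle U]^{\Pi}_k$, hence $\langle S,w\rangle\subseteq U$ and in particular $w\in U$; thus $W\subseteq U$ and $\dim\langle W\rangle=\dim\langle S,W\rangle-(k-1)\le(k+1)-(k-1)=2$.

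It then remains to show that $[S\rangle^{\Pi}_k$ is a maximal clique of $\Pi(n,k)_q$. Let $C$ be a clique of $\Pi(n,k)_q$ with $[S\rangle^{\Pi}_k\subseteq C$; then $C$ is a clique of $\Gamma_k(V)$ and hence lies in a maximal clique of $\Gamma_k(V)$, i.e.\ in some star $[S'\rangle_k$ or some top $\langle U']_k$. If $C\subseteq[S'\rangle_k$, then $[S\rangle^{\Pi}_k\subseteq[S\rangle_k\cap[S'\rangle_k$, and since $|[S\rangle^{\Pi}_k|\ge 2$ while two distinct stars of $\Gamma_k(V)$ meet in at most one vertex, $S'=S$, whence $C\subseteq[S\rangle_k\cap\Pi(n,k)_q=[S\rangle^{\Pi}_k$. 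If $C\subseteq\langle U']_k$, then $[S\rangle^{\Pi}_k=\langle U]^{\Pi}_k\subseteq\langle U]_k\cap\langle U']_k$, and since two distinct tops of $\Gamma_k(V)$ meet in at most one vertex, $U'=U$, whence $C\subseteq\langle U]_k\cap\Pi(n,k)_q=\langle U]^{\Pi}_k=[S\rangle^{\Pi}_k$. In either case $C=[S\rangle^{\Pi}_k$, so $[S\rangle^{\Pi}_k$ is a maximal clique, i.e.\ a star of $\Pi[n,k]_q$. The only slightly delicate points are the bound $|[S\rangle^{\Pi}_k|\ge 2$ (needed to invoke the facts that distinct stars, resp.\ tops, of $\Gamma_k(V)$ share at most one vertex) and the bookkeeping with the normalization ``first $k-1$ coordinates zero'' behind $S\cap\langle W\rangle=\{0\}$; I do not expect any genuine obstacle.
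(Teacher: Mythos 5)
Your proof is correct and follows essentially the same route as the paper's: the ``only if'' direction is Remark \ref{dimw2} plus maximality of $[S\rangle^{\Pi}_{k}$, and the ``if'' direction combines the bound $\dim\langle W\rangle\leqslant 2$ (the same computation as in the proof of Lemma \ref{dimwi}) with a maximality argument resting on the star/top classification of maximal cliques of $\Gamma_{k}(V)$ and the fact that distinct stars (resp.\ tops) share at most one vertex. You merely make explicit the maximality step that the paper compresses into a citation of Remark \ref{dimw2}, and you obtain $\dim\langle W\rangle\geqslant 2$ directly from the hypothesis that $U=\left\langle \{v_1,\dots,v_{k-1},w_1,w_2\}\right\rangle$ is $(k+1)$-dimensional rather than by invoking Proposition \ref{dimw1}; the underlying ingredients are identical.
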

\begin{proof} Let us suppose first that $\dim\langle W\rangle=2$. On account of Remark \ref{dimw2} $[S\rangle^{\Pi}_{k}=\langle U]^{\Pi}_{k}$ if $[S\rangle^{\Pi}_{k}$ is  a maximal clique of $\Pi(n, k)_q$ and $[S\rangle^{\Pi}_{k}\subsetneq\langle U]^{\Pi}_{k}$ if $[S\rangle^{\Pi}_{k}$ is not a maximal clique of $\Pi(n, k)_q$.

According to the proof of Lemma \ref{dimwi} we get immediately that $[S\rangle^{\Pi}_{k}\neq \langle U]^{\Pi}_{k}$ for any  $(k+1)$-dimensional subspace $U$ of $V$ if $\dim\langle W\rangle>2$. Proposition \ref{dimw1} now yields to desired claim.
\end{proof}
\begin{lemma}\label{permutation}
A clique $[S\rangle^{\Pi}_{k}$ is a star of $\Pi[n,k]_{q}$ if, and only if, $[S_\sigma\rangle^{\Pi}_{k}$ is a star of $\Pi[n,k]_{q}$ for any linear code $S_\sigma$  whose generator matrix is obtained from $M=\left[I_{k-1}\ \  A\right]$ by a permutation of the set of columns of $A$.
\end{lemma}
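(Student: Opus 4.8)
The plan is to realise a permutation $\sigma$ of the columns of $A$ as a coordinate permutation of the ambient space $V=F_q^n$, and then to observe that such a permutation is an automorphism of the graph $\Pi[n,k]_q$ carrying $[S\rangle^{\Pi}_{k}$ onto $[S_\sigma\rangle^{\Pi}_{k}$.

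First I would set up the correspondence. Labelling the coordinates of $V$ by $1,\dots,n$, the matrix $M=[I_{k-1}\ A]$ has its last $n-k+1$ columns indexed by $k,\dots,n$; a permutation $\sigma$ of these columns extends to a permutation $\pi$ of $\{1,\dots,n\}$ fixing $1,\dots,k-1$. Let $\hat\pi\colon V\to V$ be the induced linear automorphism permuting coordinates according to $\pi$. Applied rowwise to $M$, the map $\hat\pi$ permutes the last $n-k+1$ columns of $M$ by $\sigma$ and leaves $I_{k-1}$ untouched, so $\hat\pi(S)$ has generator matrix $[I_{k-1}\ A_\sigma]$; that is, $S_\sigma=\hat\pi(S)$.

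Next I would record the three properties of $\hat\pi$ that drive the argument: (i) being a linear automorphism of $V$, $\hat\pi$ preserves dimensions of subspaces and of their pairwise intersections, hence is an automorphism of $\Gamma_k(V)$ sending stars to stars; (ii) a code is projective precisely when the columns of one (equivalently, any) generator matrix for it are non-zero and pairwise non-proportional, and $\hat\pi$ merely permutes those columns, so $\hat\pi(\Pi(n,k)_q)=\Pi(n,k)_q$; consequently (iii) $\hat\pi$ restricts to an automorphism of $\Pi[n,k]_q$ and therefore maps maximal cliques of $\Pi(n,k)_q$ to maximal cliques of $\Pi(n,k)_q$.

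Finally I would combine these. Since $\hat\pi$ is linear, $\hat\pi([S\rangle_{k})=[\hat\pi(S)\rangle_{k}=[S_\sigma\rangle_{k}$, and since $\hat\pi$ fixes $\Pi(n,k)_q$ setwise this gives $\hat\pi\bigl([S\rangle^{\Pi}_{k}\bigr)=[S_\sigma\rangle^{\Pi}_{k}$. Hence $[S\rangle^{\Pi}_{k}$ is a maximal clique of $\Pi(n,k)_q$ if and only if $[S_\sigma\rangle^{\Pi}_{k}$ is, i.e.\ one is a star of $\Pi[n,k]_q$ exactly when the other is; running the argument with $\sigma$ and with $\sigma^{-1}$ yields both implications. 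The only point needing a little care is the bookkeeping of the first step — that permuting the columns of $A$ in the normalised matrix $[I_{k-1}\ A]$ is genuinely induced by a coordinate permutation of $V$ and leaves the block $I_{k-1}$ fixed — but since $\pi$ fixes the first $k-1$ coordinates this is immediate.
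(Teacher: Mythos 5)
Your proof is correct, but it takes a genuinely different route from the paper's. You realise the column permutation $\sigma$ as a coordinate permutation $\hat\pi$ of $V$ fixing the first $k-1$ coordinates, note that $\hat\pi$ is a linear automorphism preserving projectivity (it merely permutes the columns of every generator matrix), and conclude that $\hat\pi$ is an automorphism of $\Pi[n,k]_q$ carrying $[S\rangle^{\Pi}_{k}$ onto $[S_\sigma\rangle^{\Pi}_{k}$, hence carrying maximal cliques to maximal cliques. The paper instead works with the auxiliary set $W$ of admissible extension vectors $w_i$: it shows that $w_i\mapsto w_i^\sigma$ is a bijection $W\to W_\sigma$ preserving $\dim\langle W\rangle$, then splits into the cases $\dim\langle W\rangle\neq 2$ (handled by Corollary \ref{dimw1nie} and Lemma \ref{dimwi}) and $\dim\langle W\rangle=2$ (handled via Proposition \ref{startop} and the uniqueness of the enveloping top $\langle U]^{\Pi}_{k}$). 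Your symmetry argument is cleaner and more self-contained --- it needs none of the preceding results and avoids the case analysis entirely --- and it actually proves more: any monomial transformation of the last $n-k+1$ coordinates, not just a permutation, would do, and the conclusion is the stronger statement that the two cliques are isomorphic as subconfigurations of the graph. What the paper's route buys is that the bookkeeping with $W$ and $W_\sigma$ is reused verbatim in the later proofs (e.g.\ the explicit listings of the vectors $w_i$ in Theorems \ref{notprojectiveS} and \ref{degenerateS}), so the author keeps all arguments in a single uniform framework. There is no gap in your argument.
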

\begin{proof}
Let $l_i$,  where $i=1, 2, \cdots, {n-k+1}$,  be the $i$-th column of $A$, that is $M=\left[I_{k-1}\ \  A\right]=\left[I_{k-1}\ l_1\ l_2\ \dots\ l_{n-k+1}\right],$ and let     $\sigma$ be a permutation of the set $\{1, 2, \dots, n-k+1\}$. Then a generator matrix for $S_\sigma$ is of the form $M_\sigma=\left[I_{k-1}\ \  A_\sigma\right]=\left[I_{k-1}\ l_{\sigma(1)}\ l_{\sigma(2)}\ \dots\  l_{\sigma(n-k+1)}\right].$
Obviously, a $n$-dimensional vector $w_i=[0, \dots, 0, w_i^1, \dots, w_i^{n-k+1}]\in W$ if, and only if, $\left[\begin{matrix}
                                                                                                                    M_\sigma\\
                                                                                                                    w_{i}^\sigma
\end{matrix}\right]$ is a generator matrix for $S_\sigma$, where $w_i^\sigma=[0, \dots, 0, w_i^{\sigma(1)}, \dots, w_i^{\sigma(n-k+1)}].$ Thus $\dim \langle W\rangle=\dim \langle W_\sigma\rangle$, where $W_\sigma=\{w_i^\sigma; i=1, 2, \cdots, {n-k+1}\}$. In consequence, the proof is straightforward from Corollary \ref{dimw1nie} and Lemma \ref{dimwi} if $\dim \langle W\rangle\neq2$.

Suppose now that $\dim \langle W\rangle=2$. In view of Proposition \ref{startop} $[S\rangle^{\Pi}_{k}$ is a star of $\Pi[n,k]_{q}$ exactly if a  generator matrix for any projective code from $\langle U]^{\Pi}_{k}$, where $U=\left\langle v_1, \dots, v_{k-1}, w_1, w_2\right\rangle$, is of the form $\left[\begin{matrix}
                                                                                                                    M\\
                                                                                                                    w_{i}
\end{matrix}\right]$. Equivalently,  for any permutation $\sigma$ of the set $\{1, 2, \dots, n-k+1\}$ a matrix $\left[\begin{matrix}
                                                                                                                    M_\sigma\\
                                                                                                                    w_{i}^\sigma
\end{matrix}\right]$ generates a $k$-dimensional projective code. Such projective codes are the only elements of   $\langle U_\sigma]^{\Pi}_{k}$, where a $(k+1)$-dimensional subspace $U_\sigma$ of $V$ is generated by the row vectors of $M_\sigma$, $w_{1}^\sigma$ and $w_{2}^\sigma$.
\end{proof}
\section{Stars $\boldsymbol{[S\rangle^{\Pi}_{k}}$ defined by a non-degenerate $\boldsymbol{S}$}
\label{non}

In  this section, we assume that $S$ is non-degenerate and then we investigate stars $[S\rangle^{\Pi}_{k}$ which are defined by such $S$.
Obviously, if $S$ is a projective code, then  $[S\rangle^{\Pi}_{k}=\langle S]_{k}$ is a star of $\Pi[n, k]_q$. Consider the case when $S$ is not a projective code. Then  $[S\rangle^{\Pi}_{k}$ is a proper subset of $\langle S]_{k}$ and there are at least two proportional columns in $M$.

\begin{lemma}\label{cardinality}
  If $S$ is  a $(k-1)$-dimensional  non-degenerate  linear code which does not belong to $\Pi(n, k-1)_q$, then
  $$\left|[S\rangle^{\Pi}_{k}\right|=\frac{\left[\prod_{k_j\in L}(q-1)(q-2)\cdots(q-l(k_j)+1)\right]q^{l(S)-k+1}}{q-1},$$ where $L$ denotes the set of
equivalence classes of the  columns of $M$ such that $l(k_j)>1$.
\end{lemma}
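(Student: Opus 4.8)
The plan is to evaluate $|[S\rangle^{\Pi}_{k}|$ by counting the set $W$ of admissible extension rows and then dividing by $q-1$. Every code of $[S\rangle^{\Pi}_{k}$ equals $\langle v_1,\dots,v_{k-1},w\rangle$ for some $w\in W$, and each $w\in W$ has its first $k-1$ coordinates equal to $0$. If $\langle v_1,\dots,v_{k-1},w\rangle=\langle v_1,\dots,v_{k-1},w'\rangle$, then $w'=c_1v_1+\dots+c_{k-1}v_{k-1}+\lambda w$ for suitable scalars; reading off the first $k-1$ coordinates forces $c_1=\dots=c_{k-1}=0$, so $w'=\lambda w$ with $\lambda\neq0$ since $w'\neq0$ (the zero row is not in $W$: appending it to $M$ would keep the proportional columns of $M$, which exist because $S\notin\Pi(n,k-1)_q$). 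Conversely any nonzero multiple of an element of $W$ again lies in $W$. Hence $W$ is a disjoint union of scaling orbits of size $q-1$, and $|[S\rangle^{\Pi}_{k}|=|W|/(q-1)$.

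The next step is to reduce the projectivity of $\left[\begin{smallmatrix}M\\w\end{smallmatrix}\right]$ to a condition internal to each equivalence class of columns of $M$. Write its columns as $\left[\begin{smallmatrix}m_j\\b_j\end{smallmatrix}\right]$, where $b_j=0$ precisely for the columns inherited from $I_{k-1}$ and the other $b_j$ are the free entries of $w$. If $m_i$ and $m_j$ are non-proportional, the extended columns are automatically non-proportional, so two extended columns can become proportional only if the original columns lie in a common class $[k_j]$. Choosing a direction vector $m$ for that class and writing each of its columns uniquely as $\lambda m$ with $\lambda\in F_q^{*}$ (here we use that $S$ is non-degenerate, so $m\neq0$), attach to the column the slope $b/\lambda\in F_q$; two columns of the class yield proportional extended columns exactly when they share the same slope. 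Thus $w\in W$ if and only if, within every equivalence class, the attached slopes are pairwise distinct. Moreover the $k-1$ columns of $I_{k-1}$ lie in $k-1$ distinct classes, and each contributes the forced slope $0$ to its class.

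Now I would count, class by class, the ways of choosing the free entries of $w$. A class in $L$ that contains a column of $I_{k-1}$ has $l(k_j)$ columns, one of whose slopes is the forced value $0$; the remaining $l(k_j)-1$ slopes may be any pairwise distinct elements of $F_q^{*}$, and since $b\mapsto b/\lambda$ is a bijection of $F_q$, this accounts for $(q-1)(q-2)\cdots(q-l(k_j)+1)$ choices. A class in $L$ consisting only of columns of $A$ similarly contributes $q(q-1)\cdots(q-l(k_j)+1)=q\cdot(q-1)(q-2)\cdots(q-l(k_j)+1)$ choices, a singleton class formed by a single column of $A$ contributes $q$ choices, and a singleton class $\{e_i\}$ contributes $1$. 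Multiplying over all classes, each class of $L$ supplies exactly the factor $(q-1)(q-2)\cdots(q-l(k_j)+1)$, while the surviving factors of $q$ are indexed by the equivalence classes containing no column of $I_{k-1}$; there are $l(S)-(k-1)$ of them, since the remaining $k-1$ classes are exactly those meeting the $k-1$ pairwise non-proportional columns of $I_{k-1}$. Hence $|W|=\bigl[\prod_{k_j\in L}(q-1)(q-2)\cdots(q-l(k_j)+1)\bigr]\,q^{\,l(S)-k+1}$, and dividing by $q-1$ gives the stated formula.

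The delicate part is the bookkeeping of the last paragraph: separating the equivalence classes according to whether they meet $I_{k-1}$, which is what makes every class of $L$ contribute the uniform factor $(q-1)(q-2)\cdots(q-l(k_j)+1)$ and pins the residual power of $q$ at $l(S)-k+1$. One must also check, as in the first step, that the scaling orbits genuinely have size $q-1$ — this is where non-degeneracy of $S$ and the presence of proportional columns in $M$ are used — and that each factor $q-l(k_j)+1$ is positive, in accordance with Remark \ref{lkj}.
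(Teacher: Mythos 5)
Your proof is correct and takes essentially the same approach as the paper's: both count the admissible extension rows class by class via the condition that the induced ``slopes'' within each proportionality class of columns be pairwise distinct, obtaining the factor $q(q-1)\cdots(q-l(k_j)+1)$ per class, and then divide out the overcounting of rows representing the same code. The only difference is bookkeeping --- the paper counts all $q^{k-1}(q-1)$ unnormalized rows per code and divides by that, while you first normalize the first $k-1$ coordinates to zero (so the classes meeting $I_{k-1}$ lose a factor of $q$) and divide only by $q-1$; both normalizations yield the stated formula.
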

\begin{proof}
 There are precisely $q(q-1)\cdots(q-l(k_j)+1)$ possibilities of choosing coordinates of $w_i$ corresponding to columns from $[k_j]$. For any projective code of the set $[S\rangle^{\Pi}_{k}$ its generator matrix $\left[\begin{array}{c}
M\\
w_i
\end{array}\right]$ can be formed by  $q^{k-1}$ mutually non-proportional vectors $w_i$. Consequently there are exactly
$$\frac{\prod_{k_j\in L'}q(q-1)(q-2)\cdots(q-l(k_j)+1)}{(q-1)q^{k-1}}$$  projective codes containing $S$, where $L'$ denotes the set of all  equivalence classes of the columns of $M$. Simple transformations lead to the desired equation.
\end{proof}

\begin{theorem}\label{notprojectiveS}
Let $S$ be  a $(k-1)$-dimensional  non-degenerate  linear code which does not belong to $\Pi(n, k-1)_q$. The set $[S\rangle^{\Pi}_{k}$ is a star of $\Pi[n, k]_q$ if, and only if,  $l(k_j)\leqslant q$ for all columns $k_j$ of $M$  and one of the following cases occurs:
\begin{enumerate}
  \item $q=2$ and \begin{itemize}
                    \item $l(S)=k=3,\ n=6$;
                    \item $l(S)\geqslant k+1$;
                  \end{itemize}
  \item $q=3$ and \begin{itemize}
  \item $l(S)=k-1,\ |L|\geqslant3$;
                    \item $l(S)=k,\ |L|\geqslant2$;
                    \item $l(S)\geqslant k+1$;
                  \end{itemize}
  \item $q=4$ and \begin{itemize}
  \item $l(S)=k-1=1$;
  \item $l(S)=k-1,\ |L|\geqslant2$;
                    \item $l(S)\geqslant k$;
                  \end{itemize}
  \item $q\geqslant5$,
\end{enumerate} where $L$ denotes the set
of equivalence classes of the  columns of $M$ such that $l(k_j)>1$.
\end{theorem}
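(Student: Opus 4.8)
The strategy is to reduce everything, via the results already proved, to a computation of $\dim\langle W\rangle$ from the column pattern of $M$, followed by a case-by-case treatment of the borderline value $\dim\langle W\rangle=2$.

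First, since a star is non-empty and $S$ is non-degenerate (so $c(S)=0$), Remark \ref{lkj} shows that the condition $l(k_j)\le q$ for every column $k_j$ is necessary; we assume it from now on, so $[S\rangle^{\Pi}_{k}\ne\emptyset$. By Corollary \ref{dimw1nie}, $[S\rangle^{\Pi}_{k}$ is not a star when $\dim\langle W\rangle=1$; by Lemma \ref{dimwi} it is a star when $\dim\langle W\rangle\ge3$; and by Remark \ref{dimw2} together with Proposition \ref{startop}, when $\dim\langle W\rangle=2$ it is a star if, and only if, $[S\rangle^{\Pi}_{k}=\langle U]^{\Pi}_{k}$ with $U=\langle v_1,\dots,v_{k-1},w_1,w_2\rangle$. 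So it remains to (a) compute $\dim\langle W\rangle$, and (b) in the case $\dim\langle W\rangle=2$, decide whether $[S\rangle^{\Pi}_{k}=\langle U]^{\Pi}_{k}$.

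For (a): two columns lying in different equivalence classes of $M$ stay non-proportional after adjoining any row, so the conditions defining $W$ decouple over the classes and $W$ is a product $\prod_j W_j$, where $W_j$ is the set of admissible tuples of coordinates of $w$ on the $A$-columns of the class $[k_j]$ --- tuples of pairwise distinct entries, additionally all non-zero when $[k_j]$ contains a column of $I_{k-1}$. One checks that $W=D+F_q w^{*}$ for any $w^{*}\in W$, with $D=\bigoplus_j\langle W_j-W_j\rangle$, hence $\dim\langle W\rangle=\sum_j d_j+\varepsilon$, where $d_j$ is the dimension of the affine hull of $W_j$ and $\varepsilon\in\{0,1\}$ equals $1$ exactly when the origin lies outside the affine hull of some $W_j$. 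A short computation gives $d_j=$ (number of $A$-columns in $[k_j]$) in general, dropping by one precisely when $l(k_j)=q$ (the entries are then forced to exhaust $F_q$, resp.\ $F_q^{*}$, whose sum vanishes for $q>2$); likewise $\varepsilon=0$ for $q\ge3$, while for $q=2$ one always has $\varepsilon=1$ since $S$ is non-projective. Because the total number of $A$-columns equals $n-k+1\ge3$, it follows that $\dim\langle W\rangle\ge3$ whenever $q\ge5$: if no class is saturated then $\sum_j d_j=n-k+1\ge3$, and a saturated class of size $q\ge5$ occupies at least $q-1\ge4$ columns while costing only one dimension, so the sum still stays $\ge3$; this is case 4. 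For $q\in\{2,3,4\}$ the same bookkeeping, using $n-k+1\ge3$ to exclude impossible small configurations, determines $\dim\langle W\rangle$ exactly from $q,l(S),|L|,k,n$: one finds $\dim\langle W\rangle=1$ only for $q=2,\ l(S)=k-1$ (the configuration of Proposition \ref{dimw1}, hence not a star), and $\dim\langle W\rangle\ge3$ throughout the ``large'' ranges in cases 1--3 ($l(S)\ge k+1$ for $q=2,3$; $l(S)\ge k$ for $q=4$; $l(S)=k,\ |L|\ge2$ or $l(S)=k-1,\ |L|\ge3$ for $q=3$; $l(S)=k-1,\ |L|\ge2$ for $q=4$), which are therefore stars by Lemma \ref{dimwi}.

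It remains to settle (b). The residual configurations with $\dim\langle W\rangle=2$ are exactly: $q=2$ with $l(S)=k$; $q=3$ with $l(S)=k-1,\ |L|=2$ or $l(S)=k,\ |L|=1$; and $q=4$ with $l(S)=k-1,\ |L|=1$ (so $l(k_j)=q$ and $n=k+2$). For each of these $[S\rangle^{\Pi}_{k}\subseteq\langle U]^{\Pi}_{k}$, and we must decide whether equality holds, i.e.\ whether every projective $k$-code inside $U$ contains $S$. A $k$-subspace $H$ of $U$ with $S\not\subseteq H$ meets $S$ in dimension $k-2$, so a generator matrix of $H$ is obtained from $M$ by deleting one row and adjoining two rows built from $w_1,w_2$ --- precisely the kind of matrix manipulated in the proof of Proposition \ref{dimw1}. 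For each residual shape one either exhibits such an $H$ that is projective --- for instance, when $q=4$, $l(S)=k-1$, $|L|=1$ and $k\ge3$, the $k$-code obtained from $M$ by deleting the row $v_2$ and adjoining $w_1$ and $v_2+w_2$ is projective and omits $S$, so $[S\rangle^{\Pi}_{k}\subsetneq\langle U]^{\Pi}_{k}$ and it is not a star --- or verifies that every such $H$ has a zero or a repeated column, whence $[S\rangle^{\Pi}_{k}=\langle U]^{\Pi}_{k}$ and $[S\rangle^{\Pi}_{k}$ is a star (and simultaneously a top). This dichotomy isolates exactly the exceptional entries of the list: a star for $q=2,\ l(S)=k=3,\ n=6$ and for $q=4,\ l(S)=k-1=1$, but not a star for $q=3$ with $l(S)=k-1,\ |L|=2$ or $l(S)=k,\ |L|=1$, nor for $q=4,\ l(S)=k-1,\ |L|=1$ when $k\ge3$. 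The main obstacle is exactly step (b): whether a projective $k$-code inside $U$ can avoid $S$ depends on the precise multiplicities with which the columns of $A$ repeat the columns of $I_{k-1}$, on small-field arithmetic over $F_2,F_3,F_4$, and --- as the $q=4$ case shows --- even on the value of $k$, and it is this sensitivity that forces the bespoke sub-cases; by contrast the computation of $\dim\langle W\rangle$ and the treatment of $q\ge5$ are comparatively straightforward.
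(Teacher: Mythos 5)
Your architecture is sound and, modulo presentation, is essentially the paper's: necessity of $l(k_j)\leqslant q$ from Remark \ref{lkj}, then a trichotomy on $\dim\langle W\rangle$ via Corollary \ref{dimw1nie}, Lemma \ref{dimwi} and Proposition \ref{startop}, with the value $\dim\langle W\rangle=2$ reduced to deciding whether $[S\rangle^{\Pi}_{k}=\langle U]^{\Pi}_{k}$. Your systematic computation of $\dim\langle W\rangle$ by decoupling $W$ into a product over equivalence classes is a genuinely tidier organization than the paper's, which instead gets the bulk cases from the cardinality bound $\left|[S\rangle^{\Pi}_{k}\right|>q+1$ (Lemma \ref{cardinality}) and invokes Lemma \ref{dimwi} only for the configurations with exactly $q+1$ elements (e.g. $q=3$, $l(S)=k-1$, $|L|=3$ and $q=5$, $l(S)=k-1$, $|L|=1$, where it verifies $\dim\langle W\rangle=3$ by exhibiting the vectors $w_i$ explicitly). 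Your list of residual $\dim\langle W\rangle=2$ configurations checks out against the theorem, and your $q=4$, $|L|=1$, $k\geqslant3$ construction (delete a row $v_i$ with $i\neq j$, adjoin $w_1$ and $v_i+w_2$) does produce a projective code omitting $S$.

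The genuine gap is that step (b), which you correctly identify as the crux, is carried out for only one of the roughly seven residual sub-cases; for all the others the answer is asserted, not proved. Concretely, you give no argument that every projective $k$-code in $U$ contains $S$ when $q=2$, $l(S)=k=3$, $n=6$, nor when $q=4$, $l(S)=k-1=1$ (the paper proves these by showing any such code contains a vector forcing a zero or repeated column, using the arithmetic of $F_2$ and $F_4$ respectively); and you give no extending code for $q=2$, $l(S)=k$ with $(k,n)\neq(3,6)$ (the paper needs three separate matrices $M$ for $n=5$ plus a general construction for $k\geqslant4$), nor for $q=3$ with $l(S)=k-1$, $|L|=2$ or $l(S)=k$, $|L|=1$ (two further column-patterns each). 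These verifications are sensitive to the exact distribution of the $A$-columns among the classes and constitute most of the paper's proof, so they cannot be subsumed under ``one either exhibits such an $H$ or verifies that every such $H$ fails.'' A secondary, smaller issue: the formula $\dim\langle W\rangle=\sum_j d_j+\varepsilon$ with the stated values of $d_j$ and $\varepsilon$ appears correct, but it is itself only sketched (``one checks,'' ``a short computation''), and since the entire reduction rests on it, it would need to be proved, including the claim that the affine hull of the set of injective tuples in $(F_q^{*})^{m}$ is all of $F_q^{m}$ when $m<q-1$ and a hyperplane through the origin when $m=q-1$, $q\geqslant3$.
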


\begin{proof}
Whenever we will give the number of elements of $[S\rangle^{\Pi}_{k}$ in this proof, we always do it according to  Lemma \ref{cardinality} without references.

We start off by showing that the set $[S\rangle^{\Pi}_{k}$ is a star of $\Pi[n, k]_q$ in any of the mentioned cases.

$"\Leftarrow"$\ Let $l(k_j)\leqslant q$ for any column $k_j$ of $M$ and
\begin{enumerate}
  \item $q=2$.

  Suppose that $l(S)=k=3,\ n=6$. There are precisely two elements in the set  $\left[S\right\rangle^{\Pi}_{k}$. Taking into account Lemma \ref{permutation} we can assume without loss of generality that $M=\left[\begin{matrix}
                                                                                                                    v_1\\
                                                                                                                    v_2
                                                                                                                  \end{matrix}\right]=\left[\begin{matrix}
                                                                                                                    1 & 0 & 1 & 1 & 0 & 1\\
                                                                                                                    0 & 1 & 1 & 0 & 1 & 1
                                                                                                                  \end{matrix}\right]$. Then $w_1=[0,0,0,1,1,1],  w_2=[0,0,1,1,1,0]$
  and by Remark \ref{dimw2} $U=\langle\{v_1, v_2, w_1, w_2\}\rangle$ is the only one $4$-dimensional subspace of $V$ such that $\langle U]^{\Pi}_{k}$ contains $[S\rangle^{\Pi}_{k}$.

  \noindent
  Suppose that $[S\rangle^{\Pi}_{k}$ is a non-maximal clique of $\Pi(n,k)_{q}$. In consequence, there exists a $3$-dimensional projective code $Q$ of $\langle U]^{\Pi}_{k}$ such that $\dim (Q\cap Q_1)=\dim (Q\cap Q_2)=2$. Hence $Q=\langle\{v', w'_1, w'_2\}\rangle$, where $v'\in S, w'_1\in Q_1$, $w'_2\in Q_2$ and $w'_1,w'_2\notin S$. Then $Q$ contains a vector $\alpha_1v_1+\alpha_2v_2+w_1+w_2$ such that for any $\alpha_1, \alpha_2\in F_q$  either four coordinates are $0$ or all of them are $1$. Obviously, a $3$-dimensional subspace of $U$ containing such  vector is not projective.
  It means that $[S\rangle^{\Pi}_{k}=\langle U]^{\Pi}_{k}$ and in the light of Proposition \ref{startop} this is a maximal clique of $\Pi(n,k)_{q}$.

  If $l(S)\geqslant k+1$, then $\left|[S\rangle^{\Pi}_{k}\right|\geqslant q^2>q+1$. So $[S\rangle^{\Pi}_{k}$ is a star of $\Pi[n, k]_q$.
  \item $q=3$.

Consider three cases: $l(S)=k-1$ and  $|L|\geqslant4$
                        or $l(S)=k$ and $|L|\geqslant2$
                      or $l(S)\geqslant k+1$.
                    Then $\left|[S\rangle^{\Pi}_{k}\right|$ is greater than or equal to $(q-1)^3, (q-1)q, q^2$, respectively.  Thus $\left|[S\rangle^{\Pi}_{k}\right|>q+1$ and $[S\rangle^{\Pi}_{k}$ is a maximal clique of $\Pi(n,k)_{q}$.

The proof for $l(S)=k-1$ and  $|L|=3$ is more complicated since $\left|[S\rangle^{\Pi}_{k}\right|=(q-1)^2=4=q+1$.
We use the form  $M=\left[I_{k-1}\ \ A\right]$. Notice that $k-1\geqslant3$  and for each of three certain columns $k_{j_1}$, $k_{j_2}$,  $k_{j_3}$ of $I_{k-1}$ a matrix $A$ contains one or two proportional columns and there are no any other columns in $A$. 
Consider the case of $A$ with two proportional columns to each of the columns $k_{j_1}$, $k_{j_2}$,  $k_{j_3}$ of $I_{k-1}$. By Lemma \ref{permutation} we can assume without loss of generality that the $j_1-th$, the $j_2-th$, the $j_3-th$ rows of $A$ make up the matrix:
$$\left[\begin{matrix}x_1& 0& 0& x'_1& 0&0\\
0& x_2& 0& 0& x'_2&0\\
0& 0& x_3& 0& 0&x'_3\end{matrix}\right],
$$
where $x_1, x_2, x_3,x'_1, x'_2, x'_3
\in F_q\backslash\{0\}$. The remaining entries of $A$ are zero.
Then we can write  the non-zero
part of the vectors $w_1, \dots, w_4$ in the following way:
$[x_1, x_2, x_3, -x'_1, -x'_2,-x'_3
]$, $[x_1, x_2, -x_3, -x'_1, -x'_2,x'_3
]$, $[x_1, -x_2, x_3, -x'_1, x'_2,-x'_3
]$, $[-x_1, x_2, x_3, x'_1, -x'_2,-x'_3
]$, respectively. 
It is easy to check that $w_1=w_2+w_3+w_4$ and the set $\{w_2,w_3,w_4\}$ is linearly independent and this holds independently from the last three coordinates of the vectors $w_i$. 
In the light of Lemma \ref{dimwi} $[S\rangle^{\Pi}_{k}$ is  maximal.

  \item $q=4$.

  If $l(S)=k-1,\ |L|\geqslant2$ or
                   $l(S)\geqslant k$,  $\left|[S\rangle^{\Pi}_{k}\right|$ is greater than or equal to $(q-1)(q-2), (q-2)q$, respectively.  So, in both cases $\left|[S\rangle^{\Pi}_{k}\right|>q+1$ and thereby $[S\rangle^{\Pi}_{k}$ is a star of $\Pi[n,k]_{q}$.

  In the case of $l(S)=k-1=1$ we get immediately that $|L|=1$,  $[S\rangle^{\Pi}_{k}$ contains precisely two elements and $M=\left[\begin{matrix}
                                                                                                                    v
                                                                                                                  \end{matrix}\right]=\left[\begin{matrix}
                                                                                                                    1 & x_1 & x_2 & x_3
                                                                                                                  \end{matrix}\right]$, where $x_1, x_2 ,x_3\in \{1, x, x+1\}$ are elements of $F_4\backslash\{0\}$. And so we can write that $w_1=[0,x_1, xx_2, (x+1)x_3],$  $w_2=[0,x_1, (x+1)x_2, xx_3].$
  By Remark \ref{dimw2} the subspace $U=\langle\{v, w_1, w_2\}\rangle$ is the only one $3$-dimensional subspace of $V$ such that $\langle U]^{\Pi}_{k}$ contains $[S\rangle^{\Pi}_{k}$.
  Assume that $[S\rangle^{\Pi}_{k}$ is a non-maximal clique of $\Pi(n,k)_{q}$. In consequence there exists a $2$-dimensional projective code $Q$ of $\langle U]^{\Pi}_{k}$ such that  $Q=\langle\{w'_1, w'_2\}\rangle$, where $w'_1\in Q_1$, $w'_2\in Q_2$ and $w'_1,w'_2\notin S$. Then $Q$ contains a vector  \begin{align*} & w=\alpha_1w_1+\alpha_2w_2+\alpha_3v=[\alpha_3, \left((\alpha_1+\alpha_2)+\alpha_3\right)x_1,\\
   &\left((x\alpha_1+(x+1)\alpha_2)+\alpha_3\right)x_2,\left(((x+1)\alpha_1+x\alpha_2)+\alpha_3\right)x_3] \end{align*} with $\alpha_1, \alpha_2\in F_4\backslash\{0\}, \alpha_3\in F_4$.
According to the operations in $F_4$ 
 one of the scalars $\alpha_1+\alpha_2,\ x\alpha_1+(x+1)\alpha_2,\ (x+1)\alpha_1+x\alpha_2$ is 0 and the other two are the same element of $F_4$. Thus  one of the scalars $\alpha_1+\alpha_2+\alpha_3,\ x\alpha_1+(x+1)\alpha_2+\alpha_3,\ (x+1)\alpha_1+x\alpha_2+\alpha_3$ is equal $\alpha_3$  and  the other two are the same element of $F_4$. Any matrix of the form $\left[\begin{matrix} a&a&b&b\\
                                c&d&e&f\end{matrix}\right]$ over $F_4$ contains two proportional columns. Therefore any
 $2$-dimensional subspace of $U$ containing $w$ is not projective, and so we get a contradiction.
  It means that $[S\rangle^{\Pi}_{k}=\langle U]^{\Pi}_{k}$  and in view of Proposition \ref{startop} this is a maximal clique of $\Pi(n,k)_{q}$.


  \item $q\geqslant5$.

  Suppose first that $l(S)=k-1$, $q=5$ and $|L|\geqslant2$. We get immediately that $\left|[S\rangle^{\Pi}_{k}\right|\geqslant(q-1)(q-2)>q+1$.

  \noindent
  If $l(S)=k-1$ and $q\geqslant6$, then $\left|[S\rangle^{\Pi}_{k}\right|\geqslant(q-2)(q-3)$ and thereby $\left|[S\rangle^{\Pi}_{k}\right|>q+1$  for all $q\geqslant6$.

  \noindent
  Similarly, $l(S)\geqslant k$ yields  $\left|[S\rangle^{\Pi}_{k}\right|\geqslant(q-2)q>q+1$ for all $q\geqslant5$.

  In the last case, namely $q=5,\ l(S)=k-1$ and $|L|=1$ we obtain $\left|[S\rangle^{\Pi}_{k}\right|=(q-2)(q-3)=6=q+1$. A matrix $A$ consists of three or four columns proportional to the $j$-th column of $I_{k-1}$. Thus the only non-zero elements of $A$ are  three or four coordinates $x_1, x_2, x_3, x_4\in F_q\backslash\{0\}$ in  the $j$-th row, respectively. Consider $A$ with four columns. Then we can write the non-zero part of the vectors $w_i$ as:
   $[x_1, 2x_2, 3x_3, 4x_4
]$, $[x_1, 2x_2, 4x_3, 3x_4
]$, $[x_1, 3x_2, 2x_3, 4x_4
]$, $[x_1, 3x_2, 4x_3, 2x_4
]$, $[x_1, 4x_2, 2x_3, 3x_4
]$ and $[x_1, 4x_2, 3x_3, 2x_4
]$.  
It is easy to see that $\dim\langle\{w_i,\ i=1, \dots, 6\}\rangle=3$ and this same holds in the case of $A$ with three columns.  In the light of Lemma \ref{dimwi} $[S\rangle^{\Pi}_{k}$ is a star of $\Pi[n,k]_{q}$.
\end{enumerate}

$"\Rightarrow"$ We will show that  $[S\rangle^{\Pi}_{k}$ is not a maximal clique of $\Pi(n,k)_{q}$ in any other case than the above. According to Remark \ref{lkj} it holds if $l(k_j)> q$ for some column $k_j$ of $M$. Suppose then that $l(k_j)\leqslant q$ for any column $k_j$ of $M$. In most of the studied cases we present a generator matrix $X$ for a $k$-dimensional projective code $Q$ such that $\dim\left(Q\cap Q_i\right)=k-1$ for any $Q_i\in [S\rangle^{\Pi}_{k}$, 
which is equivalent to not be $[S\rangle^{\Pi}_{k}$  a star of $\Pi[n,k]_{q}$.
Consider all the possibilities of $q$ again:
\begin{enumerate}
  \item $q=2$;

  We get immediately that $\left|[S\rangle^{\Pi}_{k}\right|=q^0=1$ if $l(S)=k-1$. On account of Corollary \ref{dimw1nie} $[S\rangle^{\Pi}_{k}$ is not a maximal clique of $\Pi(n,k)_{q}$.

  Assume now that $l(S)=k=3$ and $n=5$. Then there are exactly two codes in $[S\rangle^{\Pi}_{k}$.
  In view of Lemma \ref{permutation} it suffices to consider  three cases of $M$. 
  We present them with the corresponding vectors $w_1, w_2$ and generator matrices $X$ below. 
\begin{itemize}
              \item $M=\left[\begin{matrix}v_1\\v_2\end{matrix}\right]=\left[\begin{matrix}1&0&1&1&0\\0&1&1&0&1\end{matrix}\right]$, $w_1=[0,0,0,1,1]$, $w_2=[0,0,1,1,1]$, $X=\left[\begin{matrix}v_1+v_2\\w_2\\v_1+w_1\end{matrix}\right]=\left[\begin{matrix}1&1&0&1&1\\0&0&1&1&1\\1&0&1&0&1\end{matrix}\right]$
              \item $M=\left[\begin{matrix}v_1\\v_2\end{matrix}\right]=\left[\begin{matrix}1&0&1&1&1\\0&1&1&0&1\end{matrix}\right]$, $w_1=[0,0,0,1,1]$, $w_2=[0,0,1,1,0]$, $X=\left[\begin{matrix}v_1\\v_2+w_1\\v_2+w_2\end{matrix}\right]=\left[\begin{matrix}1&0&1&1&1\\0&1&1&1&0\\0&1&0&1&1\end{matrix}\right]$
              \item $M=\left[\begin{matrix}v_1\\v_2\end{matrix}\right]=\left[\begin{matrix}1&0&1&0&1\\0&1&1&1&1\end{matrix}\right]$, $w_1=[0,0,0,1,1]$, $w_2=[0,0,1,1,0]$, $X=\left[\begin{matrix}v_2\\v_1+w_1\\v_1+w_2\end{matrix}\right]=\left[\begin{matrix}0&1&1&1&1\\1&0&1&1&0\\1&0&0&1&1\end{matrix}\right]$
            \end{itemize}
If $l(S)=k\geqslant4$, then also $\left|[S\rangle^{\Pi}_{k}\right|=q=2$. 
Furthermore, there exists one or two proportional columns $k_i, k_j$  in $M$, which are not proportional to any column of $I_{k-1}$. Then the $i$-th coordinate of $w_1,\ w_2$ it is $0, 1$, respectively and similarly the $j$-th coordinate of $w_1,\ w_2$ it is $1, 0$, respectively. Thus $k_i, k_j$ are not proportional in $$X=\left[\begin{matrix}v_1+v_2\\v_2+v_3\\ \vdots\\ v_{k-2}+v_{k-1}\\w_1\\w_2\end{matrix}\right].$$

For any  column $k_{i'}$  of $I_{k-1}$ there exists at most one proportional column $k_{j'}$  in $A$. But then the $i'$-th  coordinate of $w_1,\ w_2$ it is $0$, and the $j'$-th coordinate of $w_1,\ w_2$ it is $1$. It means that $k_{i'}, k_{j'}$ are not proportional in $X$. It must be pointed out that columns of $M$ are  proportional if, and only if, they are proportional in $\left[\begin{matrix}v_1+v_2\\v_2+v_3\\ \vdots\\ v_{k-2}+v_{k-1}\end{matrix}\right]$, and therefore there are no proportional columns in $X$. Therefrom it follows that $Q$ is projective.
  \item $q=3$;

  The form of $X$ in the rest of the proof (also for $q=4$) will be as follows: $X=\left[\begin{matrix}v_1\\\vdots\\v_{i-1}\\ v_i+w_1\\v_{i+1}\\ \vdots\\ v_{k-1}\\w_2\end{matrix}\right].$

  Let us assume first that $l(S)=k-1,\ |L|=2$, then $\left|[S\rangle^{\Pi}_{k}\right|=q-1=2$. A matrix $A$ consists of two columns $k_{i_1}, k_{i_2}$ proportional to a column $k_i$ of $I_{k-1}$ and one or two columns $k_{j_1}, k_{j_2}$ proportional to a column $k_j$ of $I_{k-1}$, where $i\neq j$. So, there are three or four columns of $A$.
  Consider  $A$ with four columns. By Lemma \ref{permutation} we can assume without loss of generality that the $i-th$ and the $j-th$ rows of $A$ make up the matrix:
$$\left[\begin{matrix}
x_1&x_2&0&0\\
0&0&x_3&x_4\end{matrix}\right],$$
  where  $x_1, x_2, x_3, x_4\in F_q\backslash\{0\}$. In consequence, we can write the non-zero part of $w_1, w_2$ as
$[x_1, -x_2, x_3, -x_4]$, $[x_1, -x_2, -x_3, x_4],$
  respectively.

  A matrix $X'$ obtained from entries $X$ in columns $k_i, k_j, k_{i_1}, k_{i_2}, k_{j_1}, k_{j_2}$ that are in the $i$-th,  the $j$-th rows and in $w_2$, respectively, is of the form 
   $$X'=\left[\begin{matrix}1&0&-x_1&0&x_3&-x_4\\0&1&0&0&x_3&x_4\\ 0&0&x_1&-x_2&-x_3&x_4\end{matrix}\right].$$ 
As we can see, columns of $X'$ are mutually non-proportional and so there are no any proportional columns in $X$.

Let now $l(S)=k, \ |L|=1$. It follows then that $\left|[S\rangle^{\Pi}_{k}\right|=q=3$. A matrix $A$ consists of three columns and there are precisely two possibilities. One of them is $A$ with all columns mutually proportional. Then they are not proportional to any column of $I_{k-1}$, and so each of them has at least two non-zero elements. Suppose that there are in the $i$-th and in the $j$-th rows two such elements of columns $A$. According to Lemma \ref{permutation} we can assume without loss of generality that the $i-th$ and the $j-th$ rows of $A$ make up the matrix:
$$\left[\begin{matrix}x_1&x_1&x_2\\\alpha x_1&\alpha x_1&\alpha x_2\end{matrix}\right],$$
where $x_1, x_2, \alpha\in F_q\backslash \{0\}$.
Then we can write the non-zero part of $w_1, w_2,w_3$ as
 $[0, x_1, 2x_2]$, $[2x_1, 0, x_2]$, $[2x_1, x_1, 0],$
  respectively. 
  As we can see $w_3=w_1+w_2$ and hence the intersection $Q$ with $Q_1$, $Q_2$ or $Q_3$ is $(k-1)$-dimensional.
  The coordinates of the last three columns of $X$ in the $i$-th, the $j$-th rows and in $w_2$ are the following:
  $$\left[\begin{matrix}x_1\\\alpha x_1\\ 2x_1\end{matrix}\right], \left[\begin{matrix}2x_1\\\alpha x_1\\ 0\end{matrix}\right], \left[\begin{matrix}0\\\alpha x_2\\ x_2\end{matrix}\right],$$ respectively. Thus there are no any proportional columns in $X$.

  The second possibility is $A$ consisting of two columns $k, k'$ proportional to the $i$-th column of $I_{k-1}$, and one  column $k''$ non-proportional to any other in $M$. There are at least two non-zero elements in $k''$. So, we can suppose that one of them is in the $j$-th row, where $j\neq i$. Therefore and by Lemma \ref{permutation} we can assume without loss of generality that the $i-th$ and the $j-th$ rows of $A$ make up the matrix:
$$\left[\begin{matrix}x_1&\alpha x_1&x_2\\0&0&x_3\end{matrix}\right],$$
where $x_1, x_3, \alpha\in F_q\backslash \{0\}$, $x_2\in F_q$.
Then we can write the non-zero part of $w_1, w_2,w_3$ as
 $[x_1, 2\alpha x_1, x_1]$, $[x_1, 2\alpha x_1, 2x_1]$, $[x_1, 2\alpha x_1, 0],$
  respectively. 
  It is easy to see that $w_3=2(w_1+w_2)$ and hence the intersection $Q$ with $Q_1$, $Q_2$ or $Q_3$ is $(k-1)$-dimensional.
  The coordinates of the last three columns of $X$ in the $i$-th, the $j$-th rows and in $w_2$ are the following:
  $$\left[\begin{matrix}2x_1\\0\\x_1\end{matrix}\right], \left[\begin{matrix}0\\0\\2\alpha x_1\end{matrix}\right], \left[\begin{matrix}x_2+x_1\\ x_3\\ 2x_1\end{matrix}\right],$$ respectively. This shows that $Q$ is projective.
  \item $q=4$.

  We will consider the case when $l(S)=k-1\geqslant 2$ and  $|L|=1$. Then $\left|[S\rangle^{\Pi}_{k}\right|=q-2=2$.  A matrix $A$ consists of three columns  proportional to the $j$-th column of $I_{k-1}$ and at least two rows, but all rows different from the $j$-th one are zero. The $j$-th  row of $A$ is of the form $[x_1, x_2, x_3]$, where $x_1, x_2 ,x_3\in \{1, x, x+1\}$ are elements of $F_4\backslash\{0\}$. So we can write
the non-zero part of the vectors  $w_1, w_2$ 
 in the following way: $[x_1, xx_2, (x+1)x_3]$, $[x_1, (x+1)x_2, xx_3],$
  respectively. 
  The coordinates of the last three columns of $X$ in the $i$-th, the $j$-th rows and in $w_2$, where $j\neq i$, are as follows:
  $$\left[\begin{matrix}x_1\\ x_1\\ x_1\end{matrix}\right], \left[\begin{matrix}xx_2\\x_2\\ (x+1)x_2\end{matrix}\right], \left[\begin{matrix}(x+1)x_3\\ x_3\\ xx_3\end{matrix}\right].$$ Therefore there are no any proportional columns in $X$.
\end{enumerate}
\end{proof}

\begin{exmp}
We show two cliques $[S\rangle^{\Pi}_{k}$ with two elements in  the case of $q=4$, $l(S)=k-1$ and $|L|=1$.
\begin{enumerate}[a)]
  \item Let $M=\left[\begin{matrix}1&1&1&1\end{matrix}\right]$ be a generator matrix for $S\in {\mathcal G}_{1}(F_4^4)$. 
      Generator matrices for $Q_1, Q_2\in [S\rangle^{\Pi}_{2}$ are as follows:
  $$\left[\begin{matrix}1&1&1&1\\0&1&x&x+1\end{matrix}\right],\ \ \ \left[\begin{matrix}1&1&1&1\\0&1&x+1&x\end{matrix}\right].$$
  The only one $3-$dimensional subspace of $F_4^4$ containing $S, Q_1$ and $Q_2$ is $U=\langle\{[1,1,1,1],[0,1,x,x+1],[0,1,x+1,x]\}\rangle.$
  There are precisely four elements of $\left\langle U\right]_{2}$ do not containing $S$. We can write generator matrices for them as:
  $\left[\begin{matrix}0&1&x&x+1\\0&1&x+1&x\end{matrix}\right],$ $\left[\begin{matrix}0&1&x&x+1\\1&0&x&x+1\end{matrix}\right], \left[\begin{matrix}1&0&x+1&x\\0&1&x+1&x\end{matrix}\right],$ $\left[\begin{matrix}1&0&x+1&x\\1&0&x&x+1\end{matrix}\right].$ All they are not projective. Therefrom it follows that  $[S\rangle^{\Pi}_{2}= \left\langle U\right]^{\Pi}_{2}$ is a star and a top of $\Pi[4, 2]_4$.

  \item Choose now the set $[S\rangle^{\Pi}_{3}$ such that $$\left[\begin{matrix}v_1\\v_2\\w_1\end{matrix}\right]=\left[\begin{matrix}1&0&1&1&1\\0&1&0&0&0\\0&0&1&x&x+1\end{matrix}\right],\ \ \ \left[\begin{matrix}v_1\\v_2\\w_2\end{matrix}\right]=\left[\begin{matrix}1&0&1&1&1\\0&1&0&0&0\\0&0&1&x+1&x\end{matrix}\right]$$ are generator matrices for its elements $Q_1, Q_2$. Consider the code $Q$ whose generator matrix is $\left[\begin{matrix}v_1\\v_2+w_1\\w_2\end{matrix}\right]=\left[\begin{matrix}1&0&1&1&1\\0&1&1&x&x+1\\0&0&1&x+1&x\end{matrix}\right].$
      This is a $3$-dimensional projective code which satisfies the equalities\linebreak $\dim (Q\cap Q_1)=\dim (Q\cap Q_2)=2,$ and so the set $\{Q, Q_1, Q_2\}$ is a clique of $\Pi(5, 3)_4$. Consequently $[S\rangle^{\Pi}_{3}$ is not a maximal clique of $\Pi(5, 3)_4$.
\end{enumerate}

\end{exmp}
\section{Stars  $\boldsymbol{[S\rangle^{\Pi}_{k}}$ defined by a degenerate $\boldsymbol{S}$}
\label{deg}

Throughout this section we will be concerned with cliques $[S\rangle^{\Pi}_{k}$ defined by $(k-1)$-dimensional degenerate linear code $S$ and we assume that there is precisely one zero column of $M$. We will write $k_z$ for such a column and $M'$  for the $(k-1)\times (n-1)$ matrix obtained from $M$ by removing the zero column. Let $V'$ be a hyperplane of $V$. Then a linear code with generator matrix  $M'$ is an element of the Grassmannian  ${\mathcal G}_{k-1}(V')$. This code will be denoted by $S'$.

 A generator matrix for any $k$-dimensional projective code containing $S'$ can be obtained from $M'$ by adding a certain $(n-1)$-dimensional row vector $w_i'$ which is not a linear
combination of the rows from $M'$. We can also assume that the first $k-1$ coordinates of $w_i'$ are zero.
\begin{theorem}\label{S'rzut}
If $S'$ is a $(k-1)$-dimensional projective code, then $[S\rangle^{\Pi}_{k}$ is a star of $\Pi[n, k]_q$.
\end{theorem}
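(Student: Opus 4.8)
The plan is to compute $\dim\langle W\rangle$ explicitly and then invoke Lemma \ref{dimwi}. Keep the normalization $M=[I_{k-1}\ A]$. The unique zero column of $M$ must be one of the columns of $A$, since the columns of $I_{k-1}$ are non-zero; after a harmless permutation of the columns of $A$ (Lemma \ref{permutation}) call its index $z$, and recall that every $w\in W$ has the shape $w=[0,\dots,0,w^1,\dots,w^{n-k+1}]$, where the last $n-k+1$ coordinates sit above the columns of $A$.

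The crucial step is to describe $W$ exactly. I claim that, when $S'$ is projective, such a $w$ belongs to $W$ if and only if $w^z\neq 0$. Appending $w$ to $M$ extends the $i$-th column $a_i$ of $A$ by the entry $w^i$ and extends each column $e_j$ of $I_{k-1}$ by $0$. Since $S'$ is projective, no non-zero column of $A$ is proportional to another column of $A$ or to a column of $I_{k-1}$; running through the possible pairs of columns of the enlarged matrix one checks that an extended non-zero column of $A$ can never become proportional to an extended column of $I_{k-1}$ nor to another extended column of $A$, while the former zero column (now carrying the last entry $w^z$) is non-zero and non-proportional to all the others precisely when $w^z\neq 0$. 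Finally, the matrix obtained by appending $w$ as a new row to $M$ has rank $k$ as soon as $w\neq 0$. Hence the only condition defining $W$ is $w^z\neq 0$.

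It follows that $W$ is the complement, inside the $(n-k+1)$-dimensional space $Z$ of all vectors with first $k-1$ coordinates zero, of the hyperplane $\{w^z=0\}$. Because $1<k<n-1$ forces $n-k+1\geq 3$, this complement spans $Z$: fixing $w_0\in W$, any $u\in Z$ with $u^z=0$ equals $(w_0+u)-w_0$ with both summands in $W$, so $\langle W\rangle$ contains that hyperplane, and adjoining $w_0$ gives $\langle W\rangle=Z$. Thus $\dim\langle W\rangle=n-k+1>2$, and Lemma \ref{dimwi} shows that $[S\rangle^{\Pi}_{k}$ is a star of $\Pi[n,k]_q$.

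The only place deserving care is the column-by-column check that the projectivity of $S'$ destroys every candidate proportionality once $w$ has been appended; this is elementary but requires the full list of column pairs, and it is essentially the whole content of the proof — once $W$ has been identified, the dimension count is immediate.
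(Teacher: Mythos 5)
Your proof is correct, but it follows a genuinely different route from the paper's. You identify $W$ exactly: it is the complement of the hyperplane $\{w^z=0\}$ inside the $(n-k+1)$-dimensional space $Z$ of vectors vanishing on the first $k-1$ coordinates. The column-by-column check behind this is sound: appending an entry cannot create a proportionality between two columns whose truncations are already non-zero and mutually non-proportional (which is exactly what the projectivity of $S'$ guarantees for all columns of $M$ other than $k_z$), and the extended zero column is non-zero and non-proportional to every other column precisely when $w^z\neq0$; the rank condition is automatic for $w\neq0$. From this you get $\langle W\rangle=Z$, hence $\dim\langle W\rangle=n-k+1\geqslant3$, and Lemma \ref{dimwi} finishes the argument. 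The paper instead counts elements: it splits the codes of $[S\rangle^{\Pi}_{k}$ according to whether the added row restricts to a non-trivial $w_i'$ over $M'$ or is supported only on $k_z$, obtains $\left|[S\rangle^{\Pi}_{k}\right|\geqslant[2]_q(q-1)+1=q^2>q+1$, and concludes maximality from the fact that a star meets any top in at most $q+1$ vertices and any other star in at most one. The two arguments are close cousins — your description of $W$ gives $\left|[S\rangle^{\Pi}_{k}\right|=|W|/(q-1)=q^{n-k}$, which recovers the paper's count — but yours avoids the paper's case split between $k<n-2$ and $k=n-2$ and yields the sharper structural fact $\dim\langle W\rangle=n-k+1$, while the paper's version stays at the level of cardinalities and does not require the full identification of $W$.
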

\begin{proof}
Suppose that  $S'$ is a $(k-1)$-dimensional projective code, i.e. $M'$ does not contain proportional columns. Generator matrices for $k$-dimensional projective codes containing $S$ are of the form $\left[\begin{matrix}M\\ w_i\end{matrix}\right]$, where is met one of the following conditions:
\begin{itemize}
  \item the coordinate of $w_i$ in $k_z$ is any non-zero element of $F_q$ and removing this coordinate from $w_i$ gives $w_i'$.
  The number of such codes is equal to the number of $k$-dimensional projective codes containing $S'$ multiplied by $q-1$. In the case of  $k<n-2$ all such  codes containing $S'$ make up the star $[S'\rangle^{\Pi}_{k}=[S'\rangle_{k}$ (since $S'$ is projective) of $\Gamma_k(V')$. Consequently $|[S'\rangle^{\Pi}_{k}|=[(n-1)-k+1]_q=[n-k]_q>[n-(n-2)]_q=[2]_q$.
  If $k=n-2$,  the number of $k$-dimensional projective codes containing $S'$ is equal to the number of $1$-dimensional subspaces of $F_q\times F_q$, i.e., $[2]_q$.
  \item the only non-zero coordinate of $w_i$  is the one in $k_z$. Clearly, all such matrices generate the same projective code.
\end{itemize}
Thus we get 
$|[S\rangle^{\Pi}_{k}|\geqslant[2]_q(q-1)+1=q^2>q+1$ for all $q\geqslant2$. Therefore $[S\rangle^{\Pi}_{k}$ is a maximal clique of $\Pi(n, k)_q$.
\end{proof}

 \begin{lemma}\label{cardinality2}
  If $M'$ contains  proportional columns, then
  $$\left|[S\rangle^{\Pi}_{k}\right|=\left[\prod_{k_j\in L}(q-1)(q-2)\cdots(q-l(k_j)+1)\right]q^{l(S')-k+1},$$ where $L$ denotes the set of
equivalence classes of the  columns of $M$ such that $l(k_j)>1$.
\end{lemma}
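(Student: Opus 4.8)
The plan is to repeat the counting in the proof of Lemma \ref{cardinality}, now treating the single zero column $k_z$ of $M$ as one extra factor. Every code of $[S\rangle^{\Pi}_{k}$ is generated by a matrix of the shape $\left[\begin{matrix}M\\ w_i\end{matrix}\right]$, and conversely a row vector $w_i\in F_q^{n}$ produces such a (projective) code precisely when all the extended columns are non-zero and pairwise non-proportional; a fixed $Q\in[S\rangle^{\Pi}_{k}$ arises in this way from exactly the $q^{k}-q^{k-1}=(q-1)q^{k-1}$ vectors of $Q\setminus S$. So, exactly as in Lemma \ref{cardinality}, it suffices to count the admissible $w_i$ and divide by $(q-1)q^{k-1}$.

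I would count the admissible $w_i$ by grouping the columns of $M$ into their equivalence classes. Apart from $k_z$, the columns of $M$ are the columns of $M'$, which fall into $l(S')$ equivalence classes; the classes with more than one member constitute the set $L$ of the statement (as $\{k_z\}$ has size one, $L$ is the same whether read off from $M$ or from $M'$). For a class with $l(k_j)$ members, the coordinates of $w_i$ on that class can be chosen in $q(q-1)(q-2)\cdots(q-l(k_j)+1)$ ways, by the same ``distinct ratios'' argument as in Lemma \ref{cardinality}, a singleton class imposing no condition and leaving $q$ choices. The coordinate of $w_i$ in the column $k_z$ must be non-zero, which gives $q-1$ further choices, and this choice interacts with nothing else: a column $\binom{0}{\ast}$ with $\ast\neq 0$ is proportional to no column $\binom{c}{\ast}$ with $c\neq 0$, and $M'$ has no zero column. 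Finally, $w_i\in S$ is excluded automatically, since then the coordinate of $w_i$ in $k_z$ would vanish; so no separate exclusion of $w_i\in S$ is needed. Multiplying, the number of admissible $w_i$ equals $(q-1)\prod q(q-1)(q-2)\cdots(q-l(k_j)+1)$, the product running over the equivalence classes of $M'$.

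It then remains to simplify. The product over the classes of $M'$ equals $q^{l(S')}\prod_{k_j\in L}(q-1)(q-2)\cdots(q-l(k_j)+1)$, because the $l(S')-|L|$ singleton classes contribute $q$ each and each $k_j\in L$ contributes $q\cdot(q-1)(q-2)\cdots(q-l(k_j)+1)$. Dividing $(q-1)q^{l(S')}\prod_{k_j\in L}(q-1)(q-2)\cdots(q-l(k_j)+1)$ by $(q-1)q^{k-1}$ gives the asserted value. The whole argument is essentially the verbatim computation of Lemma \ref{cardinality}; the only place that calls for a moment's care, and the nearest thing to an obstacle, is the bookkeeping around $k_z$: that it forms its own equivalence class, that it contributes $q-1$ rather than $q$ (which is exactly why the exponent is $l(S')$ and the family $L$ is unchanged), and that it makes the condition $w_i\notin S$ redundant.
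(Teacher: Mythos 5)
Your proof is correct and is essentially the computation the paper gives: both arguments rerun the per-equivalence-class count of Lemma \ref{cardinality}, with the single zero column $k_z$ forcing a non-zero coordinate of $w_i$ and hence contributing a factor $q-1$ in place of $q$, which is exactly why the exponent becomes $l(S')-k+1$ while $L$ is unchanged. The only cosmetic difference is that the paper organizes the count by first passing to the projected code $S'$ in the hyperplane $V'$ (so that $\left|[S\rangle^{\Pi}_{k}\right|$ is $q-1$ times the number of projective codes containing $S'$), whereas you count the admissible vectors $w_i$ on $M$ directly and divide by $(q-1)q^{k-1}$; your observation that $w_i\notin S$ is automatic because vectors of $S$ vanish in the column $k_z$ is a correct and welcome piece of bookkeeping.
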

\begin{proof}
 The only $k$-dimensional projective codes containing $S$ are generated by $\left[\begin{matrix}M\\ w_i\end{matrix}\right]$, where the coordinate of $w_i$ in $k_z$ is any non-zero element of $F_q$ and removing the zero column from $w_i$ gives $w_i'$. Hence the cardinality of $[S\rangle^{\Pi}_{k}$ is equal to the number of $k$-dimensional projective codes containing $S'$ multiplied by $q-1$. By the same method as in the proof of Lemma \ref{cardinality} we can show that the number of such codes containing $S'$ is $$\frac{\left[\prod_{k_j\in L}(q-1)(q-2)\cdots(q-l(k_j)+1)\right]q^{l(S')-k+1}}{(q-1)},$$ which yields the desired equation.
\end{proof}
 \begin{theorem}\label{degenerateS}
Let $M'$ contains proportional columns.  The set $[S\rangle^{\Pi}_{k}$ is a star of $\Pi[n, k]_q$ if, and only if,  $l(k_j)\leqslant q$ for all columns $k_j$ of $M$ and one of the following cases occurs:
\begin{enumerate}
  \item $q=2$ and \begin{itemize}
                    \item $l(S')=k=3,\ k<n-2$;
                    \item $l(S')\geqslant k+1,\ k<n-2$;
                  \end{itemize}
  \item $q=3$ and \begin{itemize}
  \item $l(S')=k-1=1$; \item $l(S')=k-1$, $|L|\geqslant2$;
                    \item $l(S')\geqslant k$;
                  \end{itemize}
  \item $q\geqslant4$,
\end{enumerate} where $L$ denotes the set
of equivalence classes of the  columns of $M$ such that $l(k_j)>1$.
\end{theorem}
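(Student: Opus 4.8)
The plan is to mirror the structure of the proof of Theorem~\ref{notprojectiveS}, using Lemma~\ref{cardinality2} in place of Lemma~\ref{cardinality} and exploiting the fact that, by the first bullet in the proof of Theorem~\ref{S'rzut}, every $k$-dimensional projective code containing $S$ arises from a $k$-dimensional projective code containing $S'$ by inserting an arbitrary non-zero coordinate in the position $k_z$. Thus $[S\rangle^{\Pi}_k$ is essentially $(q-1)$ ``parallel copies'' of the analysis carried out for $S'$, and the extra factor of $q-1$ (absent the division by $q-1$ that appeared in Lemma~\ref{cardinality}) is exactly what makes the cardinality thresholds shift. As in the earlier theorem, Remark~\ref{lkj} disposes of the case $l(k_j)>q$, so throughout we assume $l(k_j)\leqslant q$ for every column $k_j$ of $M$.

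For the ``$\Leftarrow$'' direction I would go case by case on $q$. In most sub-cases one simply checks via Lemma~\ref{cardinality2} that $\bigl|[S\rangle^{\Pi}_k\bigr|>q+1$; then Lemma~\ref{dimwi} (equivalently, the remark after it that $\bigl|[S\rangle^{\Pi}_k\bigr|>q+1$ forces maximality) finishes it. Concretely: for $q=2$, $l(S')=k=3$ gives exactly two elements and $l(S')\geqslant k+1$ gives at least $q^2$; for $q=3$, $l(S')=k-1=1$ gives two elements, $l(S')=k-1$ with $|L|\geqslant2$ gives at least $(q-1)^2=4>q+1$... wait, $(q-1)^2 = 4 = q+1$ when $q=3$, so that borderline sub-case needs the independence argument; $l(S')=k$ gives at least $(q-1)q$ and $l(S')\geqslant k+1$ gives at least $q^2$, both $>q+1$; for $q\geqslant4$ the product $\prod_{k_j\in L}(q-1)(q-2)\cdots(q-l(k_j)+1)$ is already $\geqslant q-1\geqslant3$ and combined with the factor $q^{l(S')-k+1}$ one checks it always exceeds $q+1$ (here the crucial difference from Theorem~\ref{notprojectiveS} is that there is no surplus division by $q-1$, which is why the whole branch $q\geqslant4$ collapses to a single case with no exceptions). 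For the genuinely tight sub-cases --- the two-element cliques when $q=2$ or $q=3$, and the sub-case $q=3$, $l(S')=k-1$, $|L|\geqslant2$ with cardinality exactly $q+1$ --- I would reduce to a normal form of $M'$ using Lemma~\ref{permutation}, write out the vectors $w_i'$ (hence $w_i$) explicitly, and verify that $\dim\langle W\rangle\geqslant3$, i.e., produce a linear dependence among three or more of the $w_i$ while the rest remain independent, exactly as done for $q=3,4,5$ in the previous theorem; by Lemma~\ref{dimwi} this gives a star. The condition $k<n-2$ in the $q=2$ cases must be tracked: it is precisely what guarantees, via the first bullet of the proof of Theorem~\ref{S'rzut}, that $[S'\rangle_k$ is large enough, and when $k=n-2$ the count drops and maximality fails.

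For the ``$\Rightarrow$'' direction I would show $[S\rangle^{\Pi}_k$ is \emph{not} maximal in every remaining configuration. When the cardinality formula yields $1$ (e.g.\ $q=2$, $l(S')=k-1$), Corollary~\ref{dimw1nie} applies directly. When it yields $2$, so $\dim\langle W\rangle\in\{1,2\}$; if it is $1$ use Corollary~\ref{dimw1nie}, and if it is $2$ one exhibits, just as in the $q=2,4$ portions of the proof of Theorem~\ref{notprojectiveS}, an explicit generator matrix $X$ (of the shape used there, replacing one row $v_i$ by $v_i+w_1$ and appending $w_2$, or combining rows of $M$) that generates a $k$-dimensional projective code $Q$ not containing $S$ with $\dim(Q\cap Q_i)=k-1$ for both $Q_i$; the presence of the zero column $k_z$ has to be handled by choosing the coordinate of the new vector in position $k_z$ to avoid creating a proportional pair, which is possible because we only need to dodge finitely many forbidden values and $q\geqslant2$ with room to spare in these ranges. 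The remaining non-maximal cases for $q=2$ and $q=3$ (those with $l(S')=k$, $l(S')=k-1$ with too few classes, etc.) are treated by the same $X$-construction, reducing $M'$ to a normal form by Lemma~\ref{permutation} first.

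The main obstacle will be the handful of borderline sub-cases where $\bigl|[S\rangle^{\Pi}_k\bigr|$ equals exactly $q+1$ or equals $2$: there the crude cardinality bound is useless and one must actually compute $\dim\langle W\rangle$ from an explicit normalized matrix, which requires care in choosing the normal form (and, for the degenerate setting, in placing the zero column $k_z$ relative to the proportional columns of $M'$) so that the linear dependences among the $w_i$ or the non-proportionality of the columns of $X$ are transparent. Organizing these explicit computations cleanly --- and making sure the $k<n-2$ hypothesis is invoked exactly where the count $[2]_q$ versus $[n-k]_q$ matters --- is where the bulk of the work lies; the rest is bookkeeping with Lemma~\ref{cardinality2}, Lemma~\ref{dimwi}, and Corollary~\ref{dimw1nie}.
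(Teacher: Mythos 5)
Your overall architecture (Lemma \ref{cardinality2} for the counts, Remark \ref{lkj} to dispose of $l(k_j)>q$, the bound $\left|[S\rangle^{\Pi}_{k}\right|>q+1$ for most sub-cases, explicit matrices $X$ for the non-maximal cases) matches the paper. But there is a concrete gap in your plan for the tight sub-cases of the ``$\Leftarrow$'' direction. For the two-element cliques ($q=2$, $l(S')=k=3$, $k<n-2$, and $q=3$, $l(S')=k-1=1$) you propose to ``verify that $\dim\langle W\rangle\geqslant 3$'' and invoke Lemma \ref{dimwi}. That cannot work: a clique $[S\rangle^{\Pi}_{k}$ with exactly two elements forces $\dim\langle W\rangle=2$, since each code contributes to $W$ only the non-zero scalar multiples of a single representative $w_i$, so Lemma \ref{dimwi} is never applicable there. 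The only available route --- and the one the paper takes --- is via Remark \ref{dimw2} and Proposition \ref{startop}: such a $[S\rangle^{\Pi}_{k}$ lies in a unique top $\langle U]^{\Pi}_{k}$ with $U=\langle\{v_1,\dots,v_{k-1},w_1,w_2\}\rangle$, and one must prove $[S\rangle^{\Pi}_{k}=\langle U]^{\Pi}_{k}$ by showing that every $k$-dimensional subspace of $U$ not containing $S$ fails to be projective; the paper does this by exhibiting in each candidate $Q$ a vector of the form $\alpha_1v_1+\alpha_2v_2+w_1+w_2$ (resp.\ $\alpha_1v+\alpha_2w_1+\alpha_3w_2$) whose coordinate pattern forces a zero or repeated column. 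This is a qualitatively different argument from the linear-independence computations you cite; those apply only to the $(q+1)$-element sub-case $q=3$, $l(S')=k-1$, $|L|=2$, where the paper indeed shows $\{w_1,w_2,w_3\}$ is independent and $w_4=-w_1+w_2+w_3$.

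A secondary inaccuracy: the hypothesis $k<n-2$ in the $q=2$ branch does not make ``the count drop'' when it fails --- Lemma \ref{cardinality2} gives $\left|[S\rangle^{\Pi}_{k}\right|=2$ both for $k<n-2$ and for $k=n-2$ when $l(S')=k=3$. What changes is the geometry: for $k=n-2=3$ the paper exhibits explicit projective codes $Q$ extending the clique, whereas for $k<n-2$ the extra coordinates force every candidate extension to be non-projective. Relatedly, in your ``$\Rightarrow$'' direction the dichotomy ``if $\dim\langle W\rangle=1$ use Corollary \ref{dimw1nie}, if it is $2$ build $X$'' is slightly off: a two-element $[S\rangle^{\Pi}_{k}$ always has $\dim\langle W\rangle=2$, and Corollary \ref{dimw1nie} is only relevant to the one-element case $l(S')=k-1$, $q=2$. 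The rest of that direction (explicit $X$ of the shape used in Theorem \ref{notprojectiveS}, after normalizing $M$ by Lemma \ref{permutation}) is in line with the paper.
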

\begin{proof}
We will use Lemma \ref{cardinality2} to give the number of elements of $[S\rangle^{\Pi}_{k}$ in this proof and we suppose that $M'$ contains proportional columns.

$"\Leftarrow"$\ Let $l(k_j)\leqslant q$ for any column $k_j$ of $M$ and
\begin{enumerate}
  \item $q=2$.

  If $l(S')=k=3,\ k<n-2$, then  $\left|[S\rangle^{\Pi}_{k}\right|=q=2$. Taking into account Lemma \ref{permutation}
  it suffices to consider four cases of $M$. We show them with the corresponding vectors $w_1, w_2$:  

  $\left[\begin{matrix}
  1&0&1&1&0&1&0\\
  0&1&1&0&1&1&0
  \end{matrix}\right]$, \ $\left[\begin{matrix}0,0,0,1,1,1,1\end{matrix}\right]$, $\left[\begin{matrix}0,0,1,1,1,0,1\end{matrix}\right]$,

  $\left[\begin{matrix}
  1&0&1&1&0&0\\
  0&1&1&0&1&0
  \end{matrix}\right]$, \ $\left[\begin{matrix}0,0,0,1,1,1,\end{matrix}\right]$, $\left[\begin{matrix}0,0,1,1,1,1\end{matrix}\right]$,

  $\left[\begin{matrix}
  1&0&1&1&1&0\\
  0&1&1&0&1&0
  \end{matrix}\right]$, \ $\left[\begin{matrix}0,0,0,1,1,1\end{matrix}\right]$, $\left[\begin{matrix}0,0,1,1,1,1\end{matrix}\right]$,

$\left[\begin{matrix}
  1&0&1&0&1&0\\
  0&1&1&1&1&0
  \end{matrix}\right]$, \ $\left[\begin{matrix}0,0,0,1,1,1\end{matrix}\right]$, $\left[\begin{matrix}0,0,1,1,0,1\end{matrix}\right]$.

\noindent
The rest of this proof is almost identical with the one from\linebreak
 Theorem \ref{notprojectiveS} ($"\Leftarrow",\ q=2,\ l(S)=k=3,\ n=6$).
 Assuming that $[S\rangle^{\Pi}_{k}$ can be extended by including $Q$ and showing that $Q$ contains a vector $\alpha_1v_1+\alpha_2v_2+w_1+w_2$ such that for any $\alpha_1, \alpha_2\in F_q$  either four coordinates are $0$ or at least five of them are $1$ leads to a contradiction and we thus get that $[S\rangle^{\Pi}_{k}$ is a star and a top of $\Pi[n,k]_{q}$  simultaneously.


  Suppose now that $l(S')\geqslant k+1$. Then $\left|[S\rangle^{\Pi}_{k}\right|\geqslant q^2=4>3=q+1$, and so $[S\rangle^{\Pi}_{k}$ is a star of $\Pi[n, k]_q$.
  \item $q=3$.

  We assume first that $l(S')=k-1=1$. It is clear that $n=4,\ |L|=1$ and   $\left|[S\rangle^{\Pi}_{k}\right|=q-1=2$. A generator matrix for $S$ is of the form $M=\left[\begin{matrix}v\end{matrix}\right]=\left[\begin{matrix}1&0&x_1&x_2\end{matrix}\right],$ where $x_1, x_2\in F_3\backslash\{0\}$, and then $w_1=\left[0,1,x_1,2x_2\right],$ $w_2=\left[0,2,x_1,2x_2\right]$.
  Also here we assume that $[S\rangle^{\Pi}_{k}$ can be extended by including $Q$ and in the same manner as before we can get a contradiction. It follows from the fact that
  $Q$ contains a vector $\alpha_1v+\alpha_2w_1+\alpha_3w_2$, where $\alpha_1\in F_q,\ \alpha_2, \alpha_3\in F_q\backslash\{0\}$, whose coordinates satisfying one of the three conditions: at least two  are $0$ or at least three  are the same or these are two pairs of the same elements.
And so $Q$ is not projective. Thereby $[S\rangle^{\Pi}_{k}$ is a star and a top of $\Pi[n, k]_q$.

  If $l(S')=k-1$, $|L|\geqslant3$ or $l(S')\geqslant k$, then $\left|[S\rangle^{\Pi}_{k}\right|\geqslant(q-1)^3=8>q+1$ or $\left|[S\rangle^{\Pi}_{k}\right|\geqslant(q-1)q=6>q+1$, respectively. Therefore $[S\rangle^{\Pi}_{k}$ is a maximal clique of $\Pi(n,k)_{q}$ in both cases.

  Let now $l(S')=k-1,\ |L|=2$. Consequently $\left|[S\rangle^{\Pi}_{k}\right|\geqslant(q-1)^2=4=q+1$.
 A matrix $A$ consists of zero column $k_z$ and one or two proportional columns for each of two certain columns $k_{j_1}$, $k_{j_2}$ of $I_{k-1}$. 
  Consider the case of $A$ with five columns. In view of Lemma \ref{permutation} we can assume without loss of generality that  the $j_1-th$ and the $j_2-th$ rows of $A$ make up the matrix
$\left[\begin{matrix}0&x_1&0&x'_1&0\\0&0&x_2&0&x'_2\end{matrix}\right],$
where $x_1, x_2, x'_1, x'_2
\in F_q\backslash\{0\}$. All the remaining elements of  $A$ are zero.
Then we can  write the
non-zero part of the vectors $w_1, \dots, w_4$ as:
$ [x_1,x_1, x_2, -x'_1, -x'_2]$,
$[x_1,x_1, -x_2, -x'_1, x'_2]$,
$[-x_1,x_1, x_2, -x'_1, -x'_2]$,
$[-x_1, x_1,-x_2, -x'_1, x'_2],$ respectively. 
It is easy to check that $w_4=-w_1+w_2+w_3$ and the set $\{w_1,w_2,w_3\}$ is always linearly independent,
regardless from the last two coordinates of $w_1, \dots, w_4$.
In the light of Lemma \ref{dimwi} $[S\rangle^{\Pi}_{k}$ is a maximal clique of $\Pi(n,k)_{q}$.

 \item $q\geqslant4$.

In this case $\left|[S\rangle^{\Pi}_{k}\right|\geqslant(q-1)(q-2)$ and so $\left|[S\rangle^{\Pi}_{k}\right|>q+1$ for any $q\geqslant4$, what finishes the proof.
  \end{enumerate}

  $"\Rightarrow"$ We will prove that  $[S\rangle^{\Pi}_{k}$ is not a maximal clique of $\Pi(n,k)_{q}$ in any other case than the above. This is straightforward from Remark \ref{lkj} if $l(k_j)> q$ for some $k_j$ of $M$. That is why we suppose that $l(k_j)\leqslant q$ for any column $k_j$ of $M$.

Consider the case $q=2$ first.

  There is exactly one element in $[S\rangle^{\Pi}_{k}$ if $l(S')=k-1$ and according to Corollary \ref{dimw1nie} $[S\rangle^{\Pi}_{k}$ is not maximal.

  The proof for $l(S')=k\geqslant4$ is exactly the same as for non-degenerate $S$ in Theorem \ref{notprojectiveS}.

  Let $l(S')=k=n-2=3$. Then $\left|[S\rangle^{\Pi}_{k}\right|=q$.
  By Lemma \ref{permutation} it suffices to consider  three cases of $M$. 
  We present them with the corresponding vectors $w_1, w_2$ and generator matrices $X$  for  $k$-dimensional projective codes $Q$ such that $[S\rangle^{\Pi}_{k}\cup Q$ is a clique of $\Pi(n,k)_{q}$.
  These are: 
 \begin{enumerate}
              \item $M=\left[\begin{matrix}v_1\\v_2\end{matrix}\right]=\left[\begin{matrix}1&0&0&1&1\\0&1&0&1&1\end{matrix}\right]$, $w_1=[0,0,1,0,1]$, $w_2=[0,0,1,1,0]$, $X=\left[\begin{matrix}v_1+w_2\\v_2+w_1\\v_1+v_2\end{matrix}\right]=\left[\begin{matrix}1&0&1&0&1\\0&1&1&1&0\\1&1&0&0&0\end{matrix}\right];$
              \item $M=\left[\begin{matrix}v_1\\v_2\end{matrix}\right]=\left[\begin{matrix}1&0&0&1&1\\0&1&0&1&0\end{matrix}\right]$, $w_1=[0,0,1,0,1]$, $w_2=[0,0,1,1,1]$, $X=\left[\begin{matrix}v_1+w_1\\v_2\\w_2\end{matrix}\right]=\left[\begin{matrix}1&0&1&1&0\\0&1&0&1&0\\0&0&1&1&1\end{matrix}\right];$
              \item $M=\left[\begin{matrix}v_1\\v_2\end{matrix}\right]=\left[\begin{matrix}1&0&0&1&0\\0&1&0&1&1\end{matrix}\right]$, $w_1=[0,0,1,0,1]$, $w_2=[0,0,1,1,1]$, $X=\left[\begin{matrix}v_1\\v_2+w_1\\w_2\end{matrix}\right]=\left[\begin{matrix}1&0&0&1&0\\0&1&1&1&0\\0&0&1&1&1\end{matrix}\right].$
            \end{enumerate}

 Suppose now that  $q=3$, $l(S')=k-1\geqslant2$ and $|L|=1$. Then $\left|[S\rangle^{\Pi}_{k}\right|=q-1=2$ and there are precisely three columns of $A$, namely $k_z$ and two columns $k_{j_1},k_{j_2}$ proportional to a column $k_j$ of $I_{k-1}$. On account of Lemma \ref{permutation} we can assume without loss of generality that the $j-th$ and the $i-th$ rows, where $i\neq j$, make up the matrix
$\left[\begin{matrix}0&x_1&x_2\\0&0&0\end{matrix}\right],
$
where $x_1, x_2$ run through all the elements of $F_q\backslash\{0\}$. So we can  write the
non-zero part of the vectors $w_1, w_2$ as
$[1,x_1,2x_2]$, $[2, x_1,2x_2]$,
respectively.  It is easy to see that $X=\left[\begin{matrix}v_1\\  \vdots\\v_{i-1}\\ v_i+w_1\\v_{i+1}\\ \vdots\\ v_{k-1}\\w_2\end{matrix}\right]$ generates $Q$ such that $[S\rangle^{\Pi}_{k}\cup Q$ is a clique of $\Pi(n,k)_{q}$. Thus  $[S\rangle^{\Pi}_{k}$ is not a star of $\Pi[n,k]_{q}$.
  \end{proof}
\begin{exmp}\begin{enumerate}[a)]
  \item Consider $[S\rangle^{\Pi}_{4}$ such that a generator matrix for $S\in {\mathcal G}_{3}(F_3^6)$ is
 $M=\left[\begin{matrix}1&0&0&0&0&0&0\\0&1&0&0&1&0&1\\
0&0&1&0&0&1&0\end{matrix}\right]$. By Lemma \ref{cardinality2} there are exactly four $4$-dimensional projective codes containing $S$.  A generator matrix for any such code
 can be obtained from $M$ by adding one of the vectors: \begin{align*}w_1=[0,0,0,1,1,1,2],\\ w_2=[0,0,0,1,1,2,2],\\ w_3=[0,0,0,2,1,1,2],\\ w_4=[0,0,0,2,1,2,2].\end{align*}
 From  $\dim\langle\{w_1, w_2, w_3, w_4\}\rangle=3$ and Lemma \ref{dimwi} it follows that $[S\rangle^{\Pi}_{4}$ is a star of $\Pi[6, 4]_3$.
\item
Choose $M=\left[\begin{matrix}v_1\\v_2\end{matrix}\right]=\left[\begin{matrix}1&0&0&1&1\\0&1&0&0&0\end{matrix}\right]$. As in the previous example, the number of equivalence classes of  $M$ is  equal $k$, but here only one class contains more than one element. The set $[S\rangle^{\Pi}_{3}$, where $S\in {\mathcal G}_{2}(F_3^5)$, consists of two codes whose generator matrices are of the form:
 $$\left[\begin{matrix}v_1\\v_2\\w_1\end{matrix}\right]=\left[\begin{matrix}1&0&0&1&1\\0&1&0&0&0\\0&0&1&1&2\end{matrix}\right],\ \ \ \left[\begin{matrix}v_1\\v_2\\w_2\end{matrix}\right]=\left[\begin{matrix}1&0&0&1&1\\0&1&0&0&0\\0&0&2&1&2\end{matrix}\right].$$
  The $3$-dimensional code with a generator matrix  $$\left[\begin{matrix}v_1\\v_2+w_1\\w_2\end{matrix}\right]=\left[\begin{matrix}1&0&0&1&1\\0&1&1&1&2\\0&0&2&1&2\end{matrix}\right]$$ is
       projective and its intersection with any element of $[S\rangle^{\Pi}_{3}$ is $2$-dimensional. Equivalently, this code along with codes from $[S\rangle^{\Pi}_{3}$ make up a clique of $\Pi(5, 3)_3$. So $[S\rangle^{\Pi}_{3}$ is not a maximal clique of $\Pi(5, 3)_3$.\end{enumerate}
\end{exmp}

\footnotesize Edyta Bartnicka\\
Institute of Information Technology,
Faculty of Applied Informatics
and Mathematics,
Warsaw University of Life Sciences - SGGW,
Nowoursynowska 166 St.,
02-787 Warsaw,
Poland\\
{\tt edyta\_bartnicka@sggw.edu.pl}
\normalsize
\end{document}